\theoremstyle{definition}
\newtheorem{theorem}{Theorem}[section]
\newtheorem{proposition}[theorem]{Proposition}
\newtheorem{corollary}[theorem]{Corollary}
\newtheorem{lemma}[theorem]{Lemma}
\newtheorem{remark}[theorem]{Remark}
\newtheorem{definition}[theorem]{Definition}
\def\note[#1]#2{
\vspace{2mm}\ \\
{\noindent\bf \footnotesize {\underline{Memo} (#1)}\hrulefill}\\{\footnotesize #2}\\
\hrulefill\\\vspace{2mm}
}
\def\todo#1
\noindent \textcolor{red}{\bf ToDo: }}#1
\numberwithin{equation}{section}
\providecommand{\keywords}[1]
{
  \noindent{\small	
  \textbf{\textit{Keywords---}} #1}
}
\providecommand{\MSC}[1]
{
  {\small
  \noindent\textbf{\textit{Mathematics Subject Classification (2020)---}} #1\par}
}
\newcommand{\NormCpt}[2]{|#1|_{#2}}
\title{Finite-dimensional approximations of push-forwards on locally analytic functionals}
\date{}
\author[1,2]{Isao Ishikawa\thanks{ishikawa.isao.5s@kyoto-u.ac.jp}}
\affil[1]{Kyoto University}
\affil[2]{RIKEN}
\begin{document}

\maketitle

\begin{abstract}
    This paper develops a functional-analytic framework for approximating the push-forward induced by an analytic map from finitely many samples. 
    Instead of working directly with the map, we study the push-forward on the space of locally analytic functionals and identify it, via the Fourier--Borel transform, with an operator on the space of entire functions of exponential type. 
    This yields finite-dimensional approximations of the push-forward together with explicit error bounds expressed in terms of the smallest eigenvalues of certain Hankel moment matrices. 
    Moreover, we obtain sample complexity bounds for the approximation from i.i.d.~sampled data.
    As a consequence, we show that linear algebraic operations on the finite-dimensional approximations can be used to reconstruct analytic vector fields from discrete trajectory data. 
    In particular, we prove convergence of a data-driven method for recovering the vector field of an ordinary differential equation from finite-time flow map data under fairly general conditions.
    
\end{abstract}

\keywords{
    Koopman operators, Fock space, locally analytic functionals, push-forward, approximation theory, extended dynamic mode decomposition (EDMD)
    }
\medskip

\MSC{
    Primary 37C30; Secondary 32E30, 30H20, 41A25, 46F15.
}

\section{Introduction}\label{sec: intro}

Data-driven methods for the analysis and control of dynamical systems have attracted considerable attention in recent years.
A prominent role is played by operator-theoretic approaches based on the Koopman operator, and in particular by the extended dynamic mode decomposition (EDMD) and its variants; see, for instance, \cite{williamsKernelbasedMethodDatadriven2015,bruntonModernKoopmanTheory2022} and the references therein.
Recent rigorous convergence results for EDMD in analytic settings include \cite{akindjiConvergencePropertiesDynamic2026}, where spectral convergence is established for analytic full branch interval maps under a suitable relation between the number of observables and the number of collocation points.
These methods approximate the Koopman operator from finite discrete trajectory data, and have been successfully applied to a wide range of problems in fluid mechanics, molecular dynamics, control, and beyond.

From the viewpoint of differential equations, however, it is often more natural and more informative to reconstruct the underlying map or vector field itself from the observed trajectories.
Given finitely many samples of an analytic map or of the flow map of an ordinary differential equation (ODE), one would like to recover the map or the vector field with quantitative error bounds, and to understand how the accuracy depends on the system and on properties of the sampling set.
The main purpose of this paper is to develop a functional-analytic framework that allows us to address these questions in a unified way for analytic dynamical systems.

Our basic point of view is to work with push-forwards on spaces of locally analytic functionals instead of working directly with the Koopman operator on a space of observables.
Throughout the paper we work with complex vector spaces $E = \mathbb{C}^d$ and $F = \mathbb{C}^r$, equipped with the standard Hermitian inner products $\langle \cdot, \cdot \rangle_E$ and $\langle \cdot, \cdot \rangle_F$, respectively.
We fix a compact neighbourhood $K_0 \subset E$ of $0_E$ which is \emph{absolutely convex}, that is, $ax+by \in K_0$ whenever $x,y \in K_0$ and $a,b \in \mathbb{C}$ with $|a|+|b|\le 1$.
The associated Minkowski functional
\[
  \NormCpt{z}{K_0} := \inf\{ r>0 : z/r \in K_0\}, \qquad z \in E,
\]
defines a norm on $E$ such that $\NormCpt{z}{K_0} = \|z\|_E$ when $K_0$ is the closed unit ball, and $z \in K_0$ if and only if $\NormCpt{z}{K_0}\le 1$.
We fix $p \in \mathbb{R}^d$ and denote $K := p + K_0 \subset E$.
Let $f\colon K \cap \mathbb{R}^d \to F$ be a map which admits a holomorphic extension (denoted again by $f$) to an open neighbourhood of $K$.
For an open or compact set $S \subset E$ we denote by $\mathcal{O}_E(S)$ the space of complex-valued holomorphic functions on $S$ endowed with the usual locally convex topology (see \cite[Section~1.4]{morimotoIntroductionSatosHyperfunctions1993}), and we use similar notation for $F$.
The analytic map $f\colon K \to F$ induces a continuous pull-back
\[
  f^* \colon \mathcal{O}_F(F) \longrightarrow \mathcal{O}_E(K), \qquad h \longmapsto h\circ f,
\]
and its dual map
\[
  f_* := (f^*)' \colon \mathcal{O}_E(K)' \longrightarrow \mathcal{O}_F(F)'
\]
is referred to as the \emph{push-forward} associated with $f$, where the topology of the dual spaces is the strong topology.
We also recall a slight modification of the Fourier--Borel transform $\overline{\mathbf{FB}}_E$ and its relation to entire functions of exponential type.
Writing
\[
  \mathbf{e}_\zeta(z) := e^{\zeta^* z}, \qquad \zeta,z \in E,
\]
the map $\overline{\mathbf{FB}}_E$ sends a locally analytic functional $\mu \in \mathcal{O}_E(K)'$ to the entire function
\begin{align*}
    \overline{\mathbf{FB}}_E(\mu)(\zeta) := \overline{\mu(\mathbf{e}_\zeta)}.
\end{align*}
By the Ehrenpreis--Martineau theorem, this yields a topological isomorphism between $\mathcal{O}_E(K)'$ and a space of entire functions of exponential type with respect to $K$, which we denote by ${\rm Exp}(E;K)$.
In addition, we work with the Fock space $H_E \subset \mathcal{O}_E(E)$, a reproducing kernel Hilbert space with kernel $k(z,w)=e^{\langle z,w\rangle_E}$, and exploit the continuous embedding ${\rm Exp}(E;K) \hookrightarrow H_E$ to represent the push-forward as an (unbounded) linear operator on the Hilbert space.

Within this framework, we construct finite-dimensional approximations of the push-forward from finitely many samples from $f$ and derive quantitative convergence rates.
The construction is based on finite-dimensional subspaces generated by jets at the base point $p$.
Roughly speaking, we show that the push-forward on locally analytic functionals can be approximated by a linear map on finite-dimensional subspaces of the Fock space whose matrix coefficients are computed from empirical data, and that the approximation error can be bounded explicitly in terms of the properties of $f$ and the distribution of the empirical data.

\medskip

Before stating our main results, let us introduce several notations.
We fix a numbering $ \{\alpha^{(i)}\}_{i=1}^\infty = \mathbb{Z}_{\ge 0}^d$ of  $\mathbb{Z}_{\ge 0}^d$ such that $\alpha^{(1)}$ is the zero vector, that $\alpha^{(i+1)}$ is the $i$-th elementary vector for $i=1,\dots, d$, and that $|\alpha^{(i)}| \le |\alpha^{(j)}|$ if $i \le j$.
For $i \in \mathbb{Z}_{>0} $, we define
    \begin{align*}
        u_{p,i}(z) &:= \frac{e^{-\|p\|_E^2/2}}{\sqrt{\alpha^{(i)}!}}(z - p)^{\alpha^{(i)}} e^{p^*z}.
    \end{align*}
Here, we use the multi-index notation: $z^\alpha := \prod_{i=1}^d z_i^{\alpha_i}$ and $\alpha! := \prod_{i=1}^d \alpha_i!$ for $\alpha = (\alpha_1,\dots,\alpha_d) \in \mathbb{Z}_{\ge 0}^d$ and $z=(z_1,\dots, z_d) \in \mathbb{C}^d$.
Also,  we equip $\mathbb{Z}_{\ge 0}^r$ with the numbering $\{\beta^{(i)}\}_{i=1}^\infty = \mathbb{Z}_{\ge 0}^{r}$ in the same manner, and for $q \in F$, we define
    \begin{align*}
        v_{q,i}(w) &:= \frac{e^{-\|q\|_F^2/2}}{\sqrt{\beta^{(i)}!}}(w - q)^{\beta^{(i)}} e^{q^*w}.
    \end{align*}
For $n \in \mathbb{Z}_{\ge 0}$, we set
\[
    r_n^E := \#\{\alpha \in \mathbb{Z}_{\ge 0}^d : |\alpha|\le n\}
    = \binom{n+d}{d},
    \qquad
    r_n^F := \#\{\beta \in \mathbb{Z}_{\ge 0}^r : |\beta|\le n\}
    = \binom{n+r}{r}.
\]
For $n \in \mathbb{Z}_{\ge 0}$, $z \in E$, and $w \in F$, we define (horizontal) vectors of $\mathbb{C}^{1 \times r_n^{E}}$ and $\mathbb{C}^{1 \times r_n^{F}}$ by
\begin{align*}
    \mathbf{u}_{p, n}(z) &:= \left(u_{p,1}(z), u_{p,2}(z), \dots, u_{p,r_n^{E}}(z)\right), \\
    \mathbf{v}_{q,n}(w) &:= \left(v_{q,1}(w), v_{q,2}(w), \dots, v_{q,r_n^{F}}(w)\right).
\end{align*}
For $Z = (z^1,\dots, z^N) \in E^N$ and $W = (w^1,\dots, w^N) \in F^N$, we define matrices $\mathbf{U}_{p,n,Z} \in \mathbb{C}^{N \times r_n^{E}}$  and $\mathbf{V}_{q,n,W} \in \mathbb{C}^{N \times r_n^{F}}$ by
\begin{align*}
\mathbf{U}_{p,n, Z}  := 
\begin{pmatrix}
    \mathbf{u}_{p,n}(z^1)\\
    \vdots\\
    \mathbf{u}_{p,n}(z^N)
\end{pmatrix},~~~ 
\mathbf{V}_{q,n, W} :=
\begin{pmatrix}
    \mathbf{v}_{q,n}(w^1)\\
    \vdots\\
    \mathbf{v}_{q,n}(w^N)
\end{pmatrix}.
\end{align*}

Suppose that $z^i \in K$ and $w^i = f(z^i)$ for $i=1,\dots, N$.
Then, for integers $m$ and $n$ with $m \le n$, we define $\widehat{\mathbf{C}}_{m,n,Z} \in \mathbb{C}^{r_m^{F} \times r_m^{E}}$ by the leftmost submatrix of $\mathbf{V}_{f(p),m,W}^* (\mathbf{U}_{p,n,Z}^*)^\dagger \in \mathbb{C}^{r_m^{F} \times r_n^{E}}$ of size $r_m^{F} \times r_m^{E}$:
\begin{align}
\mathbf{V}_{f(p),m,W}^* (\mathbf{U}_{p,n,Z}^*)^\dagger
=
\begin{pmatrix}
\widehat{\mathbf{C}}_{m,n,Z}  & * 
\end{pmatrix}, \label{regression matrix}
\end{align}
where $(\cdot)^\dagger$ indicates the Moore-Penrose pseudo-inverse.
For an integer $n \ge 0$ and a Borel measure $\mu$ on $E$, we define a matrix
\begin{align}
    \mathbf{D}_n(\mu)
    := \left( \int  \overline{z^{\alpha^{(i)}}} z^{\alpha^{(j)}} {\rm d}\mu(z)\right)_{i,j = 1,\dots, r^E_n}
    \label{moment matrix}
\end{align}
Let $\Lambda_n(\mu)$ be the smallest eigenvalue of $ \mathbf{D}_n(\mu)$.

\medskip

Our first theorem concerns the approximation of the push-forward from deterministic samples and identifies the convergence rate in terms of the smallest eigenvalues $\Lambda_n(\mu)$ of the Hermitian moment matrices \eqref{moment matrix}, which reduce to Hankel moment matrices when $\mu$ is supported on $\mathbb{R}^d$.
\begin{theorem}[A special version of Theorem \ref{thm: main thm}] \label{thm: main thm, intro}
    Let $\mu$ be a Borel probability measure on  $K_0 \cap \mathbb{R}^d$ absolutely continuous with respect to the Lebesgue measure on $K_0 \cap \mathbb{R}^d$.
    Let $\{x^i\}_{i=1}^\infty \subset {\rm supp}(\mu)$ and define $X_N := (p + x^1,\dots, p + x^N) \in K^N$.
    Let $\mathbf{C}_m$ be the representation matrix of $f_*|_{\mathfrak{D}_{p,m}}: \mathfrak{D}_{p,m} \to  \mathfrak{D}_{f(p),m}$ with respect to the basis $\big(\overline{\mathbf{FB}}_E^{-1}(u_{p,i}) \big)_{i=1}^{r_m^E}$ and $\big( \overline{\mathbf{FB}}_F^{-1}(v_{f(p),i}) \big)_{i=1}^{r_m^F}$.
    Assume that $N^{-1}\sum_{i=1}^N \delta_{x^i}$ weakly converges to $\mu$ as $N \to \infty$.
    Then, for any $m, n \in \mathbb{Z}_{\ge 0}$ with $m \le n$, the following inequality holds: 
    \begin{align}
          \limsup_{N \to \infty}\left\| \mathbf{C}_m - \widehat{\mathbf{C}}_{m,n,X_N}\right\|_{\rm Fr} 
          \lesssim 
          \sqrt{m!} \frac{R_\mu^n}{\Lambda_n(\mu)^{1/2}},
          \label{conv rate of pf op, intro}
    \end{align}
    where $R_\mu := \sup_{x \in {\rm supp}(\mu)}\NormCpt{x}{K_0}$.
\end{theorem}
We note that the linear subspace of $\mathcal{O}_E(K)'$ generated by the basis $\big(\overline{\mathbf{FB}}_E^{-1}(u_{p,i}) \big)_{i=1}^{r_m^E}$ coincides with the dual of the space of $m$-jets at $p$ (see Remark \ref{rem: jets}), where the jet is a canonically defined object on a manifold (see, for example, \cite[Section IV]{kolarNaturalOperationsDifferential1993}).

We also obtain quantitative sample complexity bounds for i.i.d.~sampled data from a certain distribution (see Corollary \ref{cor: main thm, iid rand. var.}).

\begin{remark}[Decreasing rate of $\Lambda_n(\mu)$]
    In a practical setting (for example, the density function is continuous on an open set), the growth rate of the inverse of $\Lambda_n$ is bounded above by an exponential function on $n$.
    In fact, for $p' \in \mathbb{R}^d$ and $\mathbf{r} := (r_1,\dots,r_d) \in \mathbb{R}_{>0}^d$, let $\Delta(p'; \mathbf{r}) := \{ x \in \mathbb{R}^d : |x_i - p_i'| \le r_i ~(i=1,\dots,d)\}$ be a rectangle in $\mathbb{R}^d$.
    Let $\rho := {\rm d}\mu / {\rm d}x$ be the Radon--Nikodym derivative of $\mu$ with respect to the Lebesgue measure on $\mathbb{R}^d$.
    Assume that $\mathop{\rm ess.inf}_{\Delta(p'; \mathbf{r})}\rho > 0$, which is true, for example, if $\rho$ is positive and continuous on an open subset.
    Then, $\Lambda_n(\mu_{\Delta(p'; \mathbf{r})}) \lesssim \Lambda_n(\mu)$ holds.
    Moreover, there exists $\eta > 0$ such that $\Lambda_n(\mu_{\Delta(p'; \mathbf{r})}) > \eta^{2n}$ for all $n$ (see Theorem \ref{thm: asymptotic formula for the minimal eigenvalues for general mu}).
    Thus, we see that 
    \begin{align*}
        \frac{R_\mu^n}{\Lambda_n(\mu)^{1/2}} \lesssim  \left( \frac{R_\mu}{\eta} \right)^n,
    \end{align*}
    and the right hand side of \eqref{conv rate of pf op, intro} converges to $0$ as $n \to \infty$ if $K_0$ is sufficiently large ($f$ is holomorphic on a sufficiently wide region). 
    For more details on the estimation of $\Lambda_n(\mu_{\Delta(p'; \mathbf{r})})$, see Appendix \ref{appendix: Asymptotics of the lowest eigenvalues of Hankel matrices}.
\end{remark}

\medskip

Our second main result is a reconstruction theorem for analytic maps, stated in terms of a data-driven analytic map $\widehat{f}_{m,n,Z_N}$ obtained from the finite-dimensional approximation of the push-forward.
It also contains, as a particular case, a convergence result for ``truncated'' least-squares polynomial approximations in the random sampling setting (see Theorem \ref{thm: convergence of least-squares polynomial}).

\begin{theorem}[A special version of Theorem \ref{thm: regression}]
\label{thm: main thm 2, intro}                                                                             
    Let $\mu$ be a Borel probability measure on  $K_0 \cap \mathbb{R}^d$ absolutely continuous with respect to the Lebesgue measure on $K_0 \cap \mathbb{R}^d$.
    Let $\{x^i\}_{i=1}^\infty \subset {\rm supp}(\mu)$ and define $X_N := (p + x^1,\dots,  p + x^N) \in K^N$.
    Assume that $N^{-1}\sum_{i=1}^N \delta_{x^i}$ weakly converges to $\mu$ as $N \to \infty$.
    For $i=1,\dots, r$ and $z \in E$, we define 
    \[
    \widehat{f}_{m,n,X_N,i}(z) := \mathbf{u}_{p,m}(z) \widehat{\mathbf{C}}_{m,n,X_N}^* \left(\frac{\partial \mathbf{v}_{f(p),m}}{\partial w_i}(0)\right)^*.\] 
    Let $\widehat{f}_{m,n,X_N} := (\widehat{f}_{m,n,X_N,1}, \dots, \widehat{f}_{m,n,X_N,r}): E \to F$.
    Then, for any $m, n \in \mathbb{Z}_{\ge 0}$ with $m \le n$ and compact subset $K_1 \Subset K_0$, we have
    \begin{align}
        &\limsup_{N \to \infty}\sup_{z \in p + K_1} \big\|\widehat{f}_{m,n,X_N}(z) - f(z) \big\|_{F}
        \le
        C\left(
        \sqrt{m!}
        \frac{R_\mu^n}{\Lambda_n(\mu)^{1/2}}
        + R_{K_1}^m
        \right)
    \end{align}
    where $R_{K_1} := \sup_{z \in K_1}\NormCpt{z}{K_0}$, $R_\mu := \sup_{x \in {\rm supp}(\mu)}\NormCpt{x}{K_0}$, and $C$ is a constant only depending on $f$, $p$, $K_0$, $K_1$, and $\mu$.
    
    Furthermore, let $L_\mu :=  \max\big(1, \sup_{x \in {\rm supp}(\mu)} \max_{i=1,\dots,d}|x_i|\big)$ and let $\delta \in (0,1)$.
    Assume that $x^1, x^2, \dots$ are i.i.d. random variables with the probability measure $\mu$ and that
    \begin{align*}
        N &\ge \frac{L_\mu^{4n} \cdot (r_n^E)^2 }{\Lambda_n(\mu)^2}\cdot 4\log(2/\delta)
    \end{align*}
    holds.
    Then,  with probability at least $1-\delta$, we have
    \begin{align}
    \sup_{z \in p+K_1} \big\|\widehat{f}_{m,n,X_N}(z) - f(z) \big\|_{F}    \le C\left(
    \sqrt{m!}
    \frac{R_\mu^n}{\Lambda_n(\mu)^{1/2}}
    + R_{K_1}^m
    \right).
    \end{align}
\end{theorem}

\medskip

The last main result is concerned with the reconstruction of analytic vector fields for ordinary differential equations 
\begin{align}
        \dot{z} &= V(z), \label{ODE}
\end{align}
from finite-time flow map data, where $V: K \to E$.
Here, we define the flow map $\phi^t: K \to E$ by $\phi^t(w) := z(t)$ where $z(t)$ is the solution of \eqref{ODE} with initial point $z(0)=w$.

Assuming that $p$ is an equilibrium of an analytic vector field $V$ and that the eigenvalues of the Jacobian matrix ${\rm d}V_p$ are real, we construct a holomorphic vector field $\widetilde{V}_{m,n,Z_N}$ from discrete samples of the time-$T$ map $\phi^T$ around $p$ and prove a quantitative convergence estimate.
\begin{theorem}[A special version of Theorem \ref{thm: reconstruction of vector field}]
\label{thm: main thm 3, intro}
Let $\mu$ be a Borel probability measure on  $K_0 \cap \mathbb{R}^d$ absolutely continuous with respect to the Lebesgue measure on $K_0 \cap \mathbb{R}^d$.
Let $\{x^i\}_{i=1}^\infty \subset {\rm supp}(\mu)$ and define $X_N := (p + x^1,\dots,  p + x^N) \in K^N$.
Assume that 
\begin{enumerate}[label=(\roman*)]
    \item $V(p) = 0_E$,
    \item the eigenvalues of the Jacobian matrix of $V$ at $p$ are real, \label{real eigenvalues}
    \item there exists $T>0$ such that for any initial point $w \in K$, the ODE \eqref{ODE} has a solution on $[0,T]$,    
    \item the measure $ N^{-1}\sum_{i=1}^N \delta_{x^i}$ weakly converges to $\mu$ as $N \to \infty$,
    \item there exists $\rho \in (0,1)$ such that $\rho > R_\mu/\liminf_{n \to \infty} \Lambda_n(\mu)^{1/(2n)}$ where  $R_\mu := \sup_{x \in {\rm supp}(\mu)}\NormCpt{x}{K_0}$.
\end{enumerate}
Let $f:= \phi^T$ and construct the matrix $\widehat{\mathbf{C}}_{m,n,X_N}$ for $f$ as in \eqref{regression matrix}.
For all sufficiently large $N$ for which $\widehat{\mathbf{C}}_{m,n,X_N}$ has no non-positive real eigenvalues, we define
\begin{align}
    \widehat{\mathbf{A}}_{m,n,X_N} := \frac{1}{T}\log \widehat{\mathbf{C}}_{m,n,X_N}.
\end{align}
where $\log$ is the principal matrix logarithm (see \eqref{matrix log} for the definition).
For $i=1,\dots, d$ and $z \in E$, we define 
\[
\widehat{V}_{m,n,X_N,i}(z) := \mathbf{u}_{p,m}(z) \widehat{\mathbf{A}}_{m,n,X_N}^* \left(\frac{\partial \mathbf{u}_{p,m}}{\partial z_i}(0)\right)^*.
\]
Let $\widehat{V}_{m,n,X_N} := (\widehat{V}_{m,n,X_N,1}, \dots, \widehat{V}_{m,n,X_N,d}): E \to E$.
Then, there exists a positive sequence $\{C_m\}_{m=0}^\infty \subset \mathbb{R}_{>0}$ such that for any compact set $K_1 \Subset K_0$, $m \in \mathbb{Z}_{\ge 0}$, and sufficiently large $n$, we have
\begin{align}
    & \limsup_{N \to \infty}\sup_{z \in p+K_1} \big\|\widehat{V}_{m,n,X_N}(z) - V(z) \big\|_E \nonumber \\
    & \le
    C' \left(
    C_m \rho^n
    + \frac{R_{K_1}^m}{1-R_{K_1}}
    \right), 
    \label{convergence rate of reconstruction of vector fields}
\end{align}
where $R_{K_1} := \sup_{z \in K_1}\NormCpt{z}{K_0}$, and $C'$ is a constant only depending on $f$, $p$, $K_0$, $K_1$, and $\mu$.
\end{theorem}

In an existing work \cite{ishikawaKoopmanOperatorsIntrinsic2025}, jet extended dynamic mode decomposition (JetEDMD) was introduced in the general framework of rigged reproducing kernel Hilbert spaces with intrinsic observables.
The procedure of constructing the matrix $\widehat{\mathbf{C}}_{m,n,Z}$ is originally proposed there in the case of $E=F$ and $f(p)=p$.
Moreover, convergence of  $\widehat{\mathbf{C}}_{m,n,Z}$ is obtained under strong assumptions on the dynamical system and on the underlying reproducing kernel Hilbert space; see, for instance, the hypotheses as in \cite[Section~6.3 and Theorem~6.13]{ishikawaKoopmanOperatorsIntrinsic2025}.

The present paper is of a different, more algebraic-analytic nature, and should be viewed as a companion to \cite{ishikawaKoopmanOperatorsIntrinsic2025}.
Specialising the reproducing kernel Hilbert space to the Fock space associated with an analytic dynamical system, we work with the push-forward $f_*$ on the space of locally analytic functionals $\mathcal{O}_E(K)'$ and represent it, via the Fourier--Borel transform.
Within this concrete analytic setting, Theorems~\ref{thm: main thm, intro}, \ref{thm: main thm 2, intro} and \ref{thm: main thm 3, intro} provide a convergence theory for data-driven approximations which is substantially more general in the analytic Fock-space case than the one available in \cite{ishikawaKoopmanOperatorsIntrinsic2025}, and which also covers problems that were not treated there.

\medskip

The paper is organized as follows.
In Section~\ref{sec: Fourier--Borel transform} we recall basic facts on holomorphic functions, the Fourier--Borel transform, and spaces of entire functions of exponential type, and we fix notation for spaces of locally analytic functionals.
Section~\ref{sec: fock space} is devoted to the Fock space and its relation to ${\rm Exp}(E;K)$ and $\mathcal{O}_E(K)$.
In Section~\ref{sec: approximations of push-forwards} we introduce the push-forward on locally analytic functionals and establish its finite-dimensional approximations, proving in particular Theorem~\ref{thm: main thm}.
Section~\ref{sec: approximation of analytic map} is concerned with the approximation of analytic maps and least-squares polynomials, and contains the proof of Theorem~\ref{thm: regression} and of our results on the smallest eigenvalues of Hankel matrices.
Finally, in Section~\ref{sec: reconstruction of vector fields} we apply our framework to analytic ordinary differential equations and establish the convergence of the data-driven reconstruction of analytic vector fields from finite flow data, proving Theorem~\ref{thm: reconstruction of vector field}.
In Appendix~\ref{appendix: Asymptotics of the lowest eigenvalues of Hankel matrices} we recall Hirschman's asymptotic formula for the smallest eigenvalues of certain Hankel matrices and adapt it to our setting, and we collect in Section~\ref{sec: notation table} a table of notation used throughout the paper.

\subsection*{Acknowledgement}
The author acknowledges support from JST CREST Grant Numbers JPMJCR1913, JPMJCR24Q6, and JPMJCR24Q1, ACT-X Grant Number JPMJAX2004, and JSPS KAKENHI Grant Numbers JP24K06771, JP24K16950, and JP24K21316.

\section{The Fourier--Borel transform} \label{sec: Fourier--Borel transform}
In this section, we summarize basic facts for the Fourier--Borel transform.
See, for example, \cite{morimotoIntroductionSatosHyperfunctions1993} for more details.
Let $K \subset E$ be a compact convex set.

We define the supporting function $h_K: E \to \mathbb{R}$ by
\begin{align}
    h_K(\zeta) &:= \sup_{z \in K}\mathrm{Re}(\langle \zeta,~z\rangle_E).
\end{align}
We note that by definition, for compact convex sets $K_1, K_2 \subset E$, $h_{K_1} + h_{K_2} = h_{K_1 + K_2}$ holds, where $K_1 + K_2 := \{v_1 + v_2 : v_1 \in K_1,~v_2 \in K_2 \}$, and $h_{K_1} \le h_{K_2}$ holds if $K_1 \subset K_2$.

For a compact convex subset $K \subset E$, we define the Banach space composed of entire functions on $E$ by
\begin{align}
    {\rm Exp}^{\rm b}(E; K) := \left\{h \in \mathcal{O}_{E}(E) : \|h\|_{\exp, K} < \infty \right\},
\end{align}
where $\|\cdot\|_{\exp, K}: \mathcal{O}_{E}(E) \to \mathbb{R}_{\ge 0} \cup \{\infty\}$ is a norm defined by
\begin{align}
    \|g\|_{\exp, K} := \sup_{\zeta \in E}|g(\zeta)|e^{-h_K(\zeta)}.
\end{align}

\begin{definition}
    \label{def: Exp, open}
    For an open convex subset $W \subset E$, we define
    \begin{align}
    {\rm Exp}(E; W) := \bigcup_{K_1 \subset W}{\rm Exp}^{\rm b}(E; K_1),
    \end{align}
    where we regard the right hand side as the inductive limit of $\{{\rm Exp}^{\rm b}(E; K_1) \}_{K_1 \subset W}$, considered as an inductive system via the natural inclusion maps and $K_1$ varies over compact convex subsets.
\end{definition}

\begin{definition}
    \label{def: Exp, compact}
    For a compact convex subset $K \subset E$, we define
    \begin{align}
    {\rm Exp}(E; K) := \bigcap_{K_1 \Supset K}{\rm Exp}^{\rm b}(E; K_1),
    \end{align}
    where we regard the right hand side as the projective limit of $\{{\rm Exp}^{\rm b}(E; K_1) \}_{K_1 \Supset K}$, considered as a projective system via the natural inclusion maps and $K_1$ varies over compact convex subsets.
\end{definition}

\begin{definition}
    For $\mu \in \mathcal{O}_E(E)'$, we define $\overline{\mathbf{FB}}_E(\mu) \in {\rm Exp}(E; E)$ by
    \begin{align}
        \overline{\mathbf{FB}}_E(\mu)(\zeta) := \overline{\mu(\mathbf{e}_\zeta)}.
    \end{align}
\end{definition}
Since the dual map $\mathcal{O}_E(S)' \to \mathcal{O}_E(E)'$ of the restriction map is injective if $S$ is compact or open convex subset of $E$ (see \cite[Theorem 1.6.2]{morimotoIntroductionSatosHyperfunctions1993}),  we naturally regard $\mathcal{O}_E(S)'$ as a subspace of $\mathcal{O}_E(E)'$.
Then, we state a crucial result, which is a slight modification of fact referred to as the Ehrenpreis--Martineau theorem (see, for example, \cite[Theorem 6.4.5]{morimotoIntroductionSatosHyperfunctions1993}):
\begin{theorem}\label{thm: EM theorem}
    Let $S \subset E$ be a compact (or open) convex set.
    The antilinear map $ \overline{\mathbf{FB}}_E$ induces a homeomorphism from $\mathcal{O}_E(S)'$ to ${\rm Exp}(E; S)$.
\end{theorem}

\begin{definition}
Let $S \subset E$ be a compact (or open) convex set.
Let $F := \mathbb{C}^r$ and let  $T \subset F$ be a compact (or open) convex set.
Let $\mathcal{L}: \mathcal{O}_E(S)' \to \mathcal{O}_F(T)'$ be a continuous linear map.
Then, we define the continuous linear map
\begin{align}
    \widetilde{\mathcal{L}} := \overline{\mathbf{FB}}_{F}\circ \mathcal{L} \circ \overline{\mathbf{FB}}_{E}^{-1}: {\rm Exp}(E; S) \to {\rm Exp}(F; T).
\end{align}   
\end{definition}

\begin{remark}\label{rem: continuity on Exp}
Let $K \subset E$ and $L \subset F$ be compact convex sets.
Let $\{K_i\}_{i=1}^\infty$ be a sequence of compact convex sets of $E$ such that $K \Subset K_{i+1} \Subset K_{i}$ for all $i \ge 1$ and $\bigcap_{i\ge 1} K_i = K$.
Then, the topology of the locally convex space ${\rm Exp}(E; K)$ is defined by the set of semi-norms $\{\|\cdot\|_{\exp, K_i}\}_{i=1}^\infty$.
Consider a continuous linear map $\widetilde{\mathcal{L}}: {\rm Exp}(E; K) \to {\rm Exp}(F; L)$.
Then, for any $L_1 \Supset L$, there exist $i \ge 1$ and $C>0$ such that $\|\widetilde{\mathcal{L}}(h)\|_{\exp, L_1} \le C \|h\|_{\exp, K_i}$ holds for any $h \in {\rm Exp}(E; K)$ (see \cite[Proposition 5, \S 1, Chapter II]{bourbakiTopologicalVectorSpaces1987}).
\end{remark}

For $p \in E$ and $\alpha = (\alpha_1, \dots, \alpha_d) \in \mathbb{Z}_{\ge 0}^d$, we define $\delta_{p}^{(\alpha)} \in \mathcal{O}_E(\{p\})'$ by
\begin{align}
    \delta_{p}^{(\alpha)}(h) := (\partial_{z_1}^{\alpha_1} \cdots \partial_{z_d}^{\alpha_d}h)(p)
\end{align}
for $h \in \mathcal{O}_E(\{p\})$.
Then, we define
\begin{align}
    \mathfrak{D}_{p,n} := {\rm span}\left(
    \left\{\delta_{p}^{(\alpha)} : \alpha=(\alpha_1,\dots, \alpha_d) \in \mathbb{Z}_{\ge 0}^d,~\sum_{i=1}^d \alpha_i \le n\right\}\right)
\end{align}
for each $n \in \mathbb{Z}_{\ge 0}$.
Let $\mathfrak{D}_p := \cup_{n \ge 0} \mathfrak{D}_{p,n}$.

\begin{remark}
    \label{rem: jets}
    Let $\mathfrak{m}_p := \{h \in \mathcal{O}_E(\{p\}) : h(p) = 0 \}$ be an ideal of $\mathcal{O}_E(\{p\})$.
    Then, we see that the image of the dual map $ (\mathcal{O}_E(\{p\}) / \mathfrak{m}_p^{n+1})' \to \mathcal{O}_E(E)'$ of the natural surjection coincides with $\mathfrak{D}_{p,n}$.
    We also remark that the set $\mathcal{O}_E(\{p\}) / \mathfrak{m}_p^{n+1}$ is a geometrically canonical object, usually referred to as the space of $n$-jets at $p$ (see \cite[Remark 12.7]{kolarNaturalOperationsDifferential1993}).
\end{remark}

We define a finite-dimensional subspace of ${\rm Exp}(E; E)$ by
\begin{align}
    \widetilde{\mathfrak{D}}_{p,n} := \overline{\mathbf{FB}}_E(\mathfrak{D}_{p,n}).
\end{align}
For $n \in \mathbb{Z}_{\ge 0}$, let $\mathcal{P}_n(E) \subset {\rm Exp}(E; E)$ be the space of polynomial functions on $E$ of degree at most $n$.
Then, by definition, we immediately have the following proposition:
\begin{proposition}\label{prop: basic properties of FB}
    For $p \in E$ and $\alpha = (\alpha_1,\dots, \alpha_d) \in \mathbb{Z}_{\ge 0}^d$, we have $\overline{\mathbf{FB}}_E(\delta_p^{(\alpha)})(z)= z^{\alpha} \mathbf{e}_{p}(z)$.
    In particular, we have $\widetilde{\mathfrak{D}}_{p,n} = \left\{ P\mathbf{e}_{p} : P \in \mathcal{P}_n(E)\right\}$.
\end{proposition}

\section{The Fock space} \label{sec: fock space}
We introduce a Hilbert space contained in $\mathcal{O}_E(E)$ as follows:
\begin{definition}[Fock space]
We define 
\begin{align*}
H_E  := \left\{ h \in \mathcal{O}_E(E) : \int_{E} |h(z)|^2 e^{-\|z\|_E^2}\,{\rm d}\mu_E(z) < \infty\right\}
\end{align*}
with the inner product
\begin{align*}
    \langle h,\,g\rangle_{H_E} &:= \int_{E} h(z) \overline{g(z)}  e^{-\|z\|_E^2}\,{\rm d}\mu_E(z).
\end{align*}
\end{definition}
This Hilbert space $H_E$ is usually called the Fock space or the Segal--Bargmann space.
For details of the theory of the Fock space, see, for example, \cite{zhuAnalysisFockSpaces2012}.
We have the following proposition:
\begin{proposition}\label{prop: RKHS}
For any $w \in E$, we have $\mathbf{e}_w \in H_E$  and $\langle h, \mathbf{e}_w\rangle_{H_E} = h(w)$.
\end{proposition}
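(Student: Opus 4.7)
The plan is to establish the reproducing identity via the standard orthonormal basis of monomials. First I would verify, by reducing to a product of one-variable Gaussian integrals in the coordinates induced by the orthonormal basis $(e_1,\dots,e_d)$ of $E_\mathbb{R}$, that
\begin{align*}
\langle z^\alpha, z^\beta\rangle_{H_E} = \alpha!\,\delta_{\alpha,\beta}, \qquad \alpha,\beta \in \mathbb{Z}_{\ge 0}^d;
\end{align*}
the normalization $\int_E e^{-\|z\|_E^2}{\rm d}\mu_E(z)=1$ is precisely what makes the constant come out to $\alpha!$. Consequently $\{z^\alpha/\sqrt{\alpha!}\}_\alpha$ is an orthonormal family in $H_E$, and by expanding any $h \in H_E$ in polar-type coordinates and using the rotational invariance of $e^{-\|z\|_E^2}{\rm d}\mu_E(z)$ under $z_i \mapsto e^{\mathrm{i}\theta_i}z_i$, one sees that the Taylor coefficients of $h$ coincide with its Fourier coefficients against this system; in particular it is a complete orthonormal basis of $H_E$.

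Next I would check that $\mathbf{e}_w \in H_E$. With the convention $\langle w, z\rangle_E = \sum_i \overline{w_i}z_i$, the Taylor expansion reads
\begin{align*}
\mathbf{e}_w(z) = e^{\langle w, z\rangle_E} = \sum_{\alpha \in \mathbb{Z}_{\ge 0}^d} \frac{\overline{w}^{\,\alpha}}{\alpha!}\, z^\alpha,
\end{align*}
so the coefficients of $\mathbf{e}_w$ against $z^\alpha/\sqrt{\alpha!}$ are $\overline{w}^{\,\alpha}/\sqrt{\alpha!}$. Applying monotone convergence to the partial sums of $|\mathbf{e}_w(z)|^2 e^{-\|z\|_E^2}$ and using the orthogonality above yields
\begin{align*}
\|\mathbf{e}_w\|_{H_E}^2 = \sum_{\alpha} \frac{|w|^{2\alpha}}{\alpha!} = e^{\|w\|_E^2} < \infty,
\end{align*}
so $\mathbf{e}_w \in H_E$ as claimed.

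Finally, for the reproducing property, any $h \in H_E$ expands as $h = \sum_\alpha c_\alpha z^\alpha/\sqrt{\alpha!}$ convergently in $H_E$. Because the same series is the Taylor expansion of $h$ as an entire function, it converges to $h$ uniformly on compacta (this can be obtained directly from Cauchy's integral formula on polydiscs combined with the Gaussian $L^2$ bound on $c_\alpha$, independently of the identity we are proving), giving $h(w) = \sum_\alpha c_\alpha w^\alpha/\sqrt{\alpha!}$ pointwise. Pairing with $\mathbf{e}_w$ and using antilinearity of $\langle \cdot,\cdot\rangle_{H_E}$ in the first slot,
\begin{align*}
\langle \mathbf{e}_w, h\rangle_{H_E} = \sum_\alpha \overline{\left(\overline{w}^{\,\alpha}/\sqrt{\alpha!}\right)}\, c_\alpha = \sum_\alpha \frac{w^\alpha}{\sqrt{\alpha!}}\, c_\alpha = h(w),
\end{align*}
which is the required identity.

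The main obstacle is avoiding a mild circularity: pointwise evaluation of the basis expansion of $h$ is most slickly controlled by pairing with $\mathbf{e}_w$, yet that is the identity we are after. I would resolve this by deriving uniform-on-compacta convergence of the monomial expansion directly from the moment bound $\|z^\alpha\|_{H_E}^2=\alpha!$, via Cauchy's inequalities on polydiscs applied to Taylor coefficients of entire $H_E$-functions, so that the reproducing identity emerges as a consequence rather than an input.
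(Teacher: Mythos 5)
Your proof is correct and self-contained, whereas the paper dispenses with the argument entirely by citing \cite[Section 2.1]{Zhu12}. Your derivation is the standard one for the Segal--Bargmann space: Gaussian orthogonality of monomials, completeness via rotational invariance of $e^{-\|z\|_E^2}{\rm d}\mu_E$, and then pairing the orthonormal expansion of $h$ with the explicitly known expansion of $\mathbf{e}_w$. You are right that there is no genuine circularity in evaluating $h(w)=\sum_\alpha c_\alpha w^\alpha/\sqrt{\alpha!}$: once the rotational-invariance argument identifies the $H_E$-Fourier coefficients with the scaled Taylor coefficients, the pointwise identity is simply the Taylor expansion of the entire function $h$ at $w$, which converges locally uniformly independently of any reproducing property; the appeal to Cauchy's inequalities on polydiscs is a safe but slightly heavier-than-necessary way to make that point. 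The computations (the norm $\|\mathbf{e}_w\|_{H_E}^2 = e^{\|w\|_E^2}$ and the final pairing $\sum_\alpha (w^\alpha/\sqrt{\alpha!})c_\alpha = h(w)$) are consistent with the paper's antilinear-in-the-first-slot convention and with $\mathbf{e}_w(z)=e^{\langle w,z\rangle_E}=\sum_\alpha \overline{w}^{\,\alpha}z^\alpha/\alpha!$. In short, you have filled in the details the paper outsources, and correctly.
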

\begin{proof}
It follows from \cite[Section 2.1]{zhuAnalysisFockSpaces2012}.
\end{proof}
\begin{remark}
Let $k(z,w) := e^{\langle z, w\rangle_E} =  \mathbf{e}_w(z)$ for $z,w \in E$.
Let $k_w = \mathbf{e}_w$.
The two-variable function $k$ is usually referred to as the positive definite kernel associated with $H_E$.
This Hilbert space is also characterized as a Hilbert space $H$ composed of $\mathbb{C}$-valued functions on $E$  satisfying both $k_w \in H$ and $\langle h, k_w \rangle_{H} = h(w)$ for any $w \in E$.
Such a Hilbert space is usually referred to as the reproducing kernel Hilbert space (RKHS).
For the general theory of RKHS, see, for example, \cite{saitohTheoryReproducingKernels2016}.
\end{remark}

The relationship between the spaces of entire functions and the Fock space is described as in the following proposition:
\begin{proposition}\label{prop: gelfand triple}
    Let $K \subset E$ be a compact convex set.
    \begin{enumerate}[label = (\arabic*)]
        \item \label{continuity i)}
        For $h \in {\rm Exp}(E; K)$ and compact convex set $K_1 \Supset K$, we have
        \begin{align*}
            \|h\|_{H_E} \le C_{K_1} \|h\|_{\exp, K_1},
        \end{align*}
        where $C_{K_1} := \sqrt{\int_{E} e^{2h_{K_1}(z)-\|z\|_E^2}\,{\rm d}\mu_E(z)}$.
        In particular, the inclusion ${\rm Exp}(E; K) \to H_E$ is continuous.
        \item \label{continuity ii)}
        For $h \in H_E$ and a compact subset $K_1 \subset E$, we have
        \begin{align*}
            \sup_{z \in K_1} |h(z)| \le \sup_{z\in K_1}e^{\|z\|_E^2/2} \cdot \|h\|_{H_E}.
        \end{align*}
        In particular, the natural map $H_E \to \mathcal{O}_E(K);~h \mapsto h|_K$ is continuous.
        \item \label{compatibility of pairing}
        For any $g \in H_E$ and $h \in {\rm Exp}(E; K)$, we have
    \begin{align}
        \langle g, h \rangle_{H_E} = \overline{\mathbf{FB}}_E^{-1}(h)(g).
    \end{align}
    \end{enumerate}
\end{proposition}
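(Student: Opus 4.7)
The plan is to handle parts (i) and (ii) as quick pointwise estimates, and then prove (iii) by checking the identity on a dense subspace and extending by continuity---for which (i) and (ii) will be invoked.

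For (i), by definition of $\|\cdot\|_{\exp, K_1^*}$ we have $|h(z)|\le \|h\|_{\exp, K_1^*}\, e^{h_{K_1^*}(z)}$ for every $z\in E$. Squaring this pointwise bound, multiplying by $e^{-\|z\|_E^2}$, and integrating against $\mu_E$ yields the desired inequality; the constant $C_{K_1}$ is finite because $h_{K_1^*}$ grows at most linearly (as $K_1^*$ is bounded) while the Gaussian weight has quadratic-exponential decay. For (ii), the reproducing property (Proposition \ref{prop: RKHS}) gives $h(z)=\langle \mathbf{e}_z, h\rangle_{H_E}$, and Cauchy--Schwarz combined with $\|\mathbf{e}_z\|_{H_E}^2 = \mathbf{e}_z(z) = e^{\|z\|_E^2}$ (again by the reproducing property) yields the pointwise bound; taking the supremum over $K_1$ finishes the argument.

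For (iii), my strategy is to verify the identity first on the generators $\mu = \delta_{0_E}^\alpha$ and then extend by linearity and continuity to all of $\mathcal{O}_E(K)'$. By Proposition \ref{prop: basic properties of FB}, $\overline{\mathbf{FB}}_K(\delta_{0_E}^\alpha)(z) = z^\alpha$. A direct Gaussian computation (factoring into one-variable integrals and passing to polar coordinates) shows that the monomials form a complete orthogonal system in $H_E$ with $\langle z^\alpha, z^\beta\rangle_{H_E} = \alpha!\,\delta_{\alpha\beta}$; combined with the continuity of point evaluation from (ii), this identifies the $H_E$-expansion of any $h\in H_E$ with its Taylor series at $0_E$, so $\langle z^\alpha, h\rangle_{H_E} = (\partial^\alpha h)(0_E) = \langle \delta_{0_E}^\alpha \mid h\rangle$, matching the right-hand side. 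Density of $\mathfrak{D}_{0_E}$ in $\mathcal{O}_E(K)'$ follows from the identity theorem via Hahn--Banach: any $h\in \mathcal{O}_E(K)$ annihilated by every $\delta_{0_E}^\alpha$ vanishes in a neighborhood of $0_E$, hence on all of the connected (convex) set $K$.

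The main obstacle is to show that both sides of the claimed identity depend continuously (and linearly) on $\mu$ for fixed $h \in H_E$, which is where the three parts of the proposition intertwine. The right-hand side is continuous because $h|_K \in \mathcal{O}_E(K)$ by (ii), so $\mu \mapsto \langle \mu \mid h\rangle$ is continuous in the strong topology. The left-hand side factors as $\mu \mapsto \overline{\mathbf{FB}}_K(\mu) \in {\rm Exp}(E; K^*) \hookrightarrow H_E$ followed by $g \mapsto \langle g, h\rangle_{H_E}$; the first factor is continuous (anti-linearly) by Ehrenpreis--Martineau (Theorem \ref{thm: EM theorem}), the embedding is continuous by (i), and the last is Cauchy--Schwarz in $H_E$. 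The key bookkeeping point is that the anti-linearity of $\overline{\mathbf{FB}}_K$ is exactly compensated by the anti-linearity of the Fock inner product in its first slot, so the composite is linear in $\mu$. Both sides thus define continuous linear functionals on $\mathcal{O}_E(K)'$ that agree on the dense subspace $\mathfrak{D}_{0_E}$, and hence agree everywhere.
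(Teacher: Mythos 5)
Parts (1) and (2) of your proof match the paper's argument essentially verbatim: a squared pointwise bound integrated against the Gaussian weight for (1), and the reproducing property plus Cauchy--Schwarz and $\|\mathbf{e}_z\|_{H_E}^2 = e^{\|z\|_E^2}$ for (2). For part (3), however, you take a genuinely different route. The paper represents a general $\mu\in\mathcal{O}_E(K)'$ by a compactly supported complex measure $\mu_0$, writes $\overline{\mathbf{FB}}_K(\mu)(z)=\overline{\int \mathbf{e}_z(w)\,{\rm d}\mu_0(w)}$, and applies Fubini together with the reproducing property to land directly at $\int h\,{\rm d}\mu_0 = \langle\mu\,|\,h\rangle$; it invokes the Ehrenpreis--Martineau theorem only to justify that this suffices for all $g\in{\rm Exp}(E;K^*)$. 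You instead verify the identity on the generators $\delta_{0_E}^\alpha$ (using $\overline{\mathbf{FB}}_K(\delta_{0_E}^\alpha)=z^\alpha$ and the orthogonality $\langle z^\alpha,z^\beta\rangle_{H_E}=\alpha!\,\delta_{\alpha\beta}$) and then extend by density of $\mathfrak{D}_{0_E}$ in $\mathcal{O}_E(K)'$, arguing that both sides are continuous and linear in $\mu$. Both approaches are correct; your density argument reuses a fact the paper itself invokes freely in Section 1, and avoids the (standard but unproven-in-the-paper) representation of analytic functionals by compactly supported measures, while the paper's Fubini route is more self-contained at the level of the proof of this proposition. One point worth being more explicit about in your version: Hahn--Banach separation naturally gives that $\mathfrak{D}_{0_E}$ annihilates no nonzero $h\in\mathcal{O}_E(K)$, which yields density of $\mathfrak{D}_{0_E}$ in the strong dual topology because $\mathcal{O}_E(K)$ is reflexive (being a DFN space, so that $(\mathcal{O}_E(K)')'=\mathcal{O}_E(K)$); since you established continuity of the left-hand functional only for the strong topology (via the Ehrenpreis--Martineau isomorphism and part (1)), this reflexivity step is what closes the gap between ``$\mathfrak{D}_{0_E}$ separates points'' and the density you actually need.
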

\begin{proof}
First, we prove \ref{continuity i)}.
Let $K_1 \subset E$ be a compact convex subset.
By definition of $H_E$, the inequality is proved as follows:
\begin{align*}
    \|h\|_{H_E}^2 &= 
    \int_{E} |h(z)|^2 e^{-\|z\|_E^2} {\rm d}\mu_E(z) \\
            &\le \|h\|_{\exp, K_1}^2 \cdot \int_{E} e^{2h_{K_1}(z)} e^{-\|z\|_E^2} {\rm d}\mu_E(z)
            = C_{K_1}^2 \|h \|_{\exp, K_1}^2.
\end{align*}
Next, we prove \ref{continuity ii)}.
Let $h \in H_E$.
By Proposition \ref{prop: RKHS}, we have $\langle h,  \mathbf{e}_z\rangle_{H_E} = h(z)$ for $z \in E$.
Thus, by the Cauchy-Schwarz inequality, $|h(z)| \le \|\mathbf{e}_z\|_{H_E} \|h\|_{H_E}$.
Since $\|\mathbf{e}_z\|_{H_E}^2 = \mathbf{e}_z(z) = e^{\|z\|_E^2}$ by Proposition \ref{prop: RKHS} again, we have \ref{continuity ii)}.
Regarding \ref{compatibility of pairing}, it suffices to show the identity $\langle h, \overline{\mathbf{FB}}_E(\mu)\rangle_{H_E} = \mu(h)$ for $\mu \in \mathcal{O}_E(K)'$ by Theorem \ref{thm: EM theorem}.
Let $\mu_0$ be the compactly supported complex-valued measure on $E$ such that $\langle \mu \,|\, h \rangle = \int h \,{\rm d}\mu_0$.
Since $\overline{\mathbf{FB}}_E(\mu)(z)  = \overline{\int \mathbf{e}_z(w) \,{\rm d}\mu_0(w)}$, we obtain \ref{compatibility of pairing} using Proposition \ref{prop: RKHS} and Fubini's theorem as follows:
\begin{align*}
    \langle h, \overline{\mathbf{FB}}_E(\mu)\rangle_{H_E}
    &=\int_{E} \int  \mathbf{e}_z(w) \,{\rm d}\mu_0(w)h(z)  e^{- \|z\|_E^2} {\rm d}\mu_E\\
    &= \int \langle h, \mathbf{e}_w\rangle_{H_E} \,{\rm d}\mu_0(w) 
    = \int h(w) \,{\rm d}\mu_0(w) = \langle \mu \,|\, h \rangle.
\end{align*}
\end{proof}
\begin{remark}
    Proposition \ref{prop: gelfand triple} implies that the triplet $({\rm Exp}(E; K), H_E, \mathcal{O}_E(K))$ constitutes the Gel'fand triple.
    For the notion of the Gel'fand triple, see, for example, \cite{bohmDiracKetsGamow1989}.
\end{remark}

We say that a subset $S$ of a linear space is absolutely convex if $ax + by \in S$ for any $x, y \in S$ and $a,b \in \mathbb{C}$ with $|a| + |b| \le 1$.
We introduce a norm on $E$ induced by a compact absolutely convex neighborhood of $0$:
\begin{definition}
For a compact absolutely convex neighborhood $K \subset E$ of $0$, we define the norm $|\cdot|_K: E \to \mathbb{R}_{\ge 0}$ by
\begin{align}
    \NormCpt{z}{K} := \inf\{ r > 0 : z \in rK\}.
\end{align}
\end{definition}
For example, $\NormCpt{z}{K} = \|z\|_E/r$ (resp. $\max_{i=1,\dots, d}|z_i|/r_i$) if $K = \{z \in E : \|z\|_E \le r\}$ (resp. $\{z \in E : |z_i| \le r_i \text{ for }i=1, \dots, d\}$).
We note that $z \in K$ if and only if $|z|_K \le 1$.

For a subset $S \subset E$, we define
\begin{align}
    S^\circ := \left\{z \in E :  \sup_{w \in S}|\langle z, w \rangle| \le 1 \right\}.
\end{align}
\begin{lemma} \label{lem: supporting functions of compact barreled sets}
    Let $K \subset E$ be a compact absolutely convex neighborhood of $0$.
    Then, $K^\circ \subset E$ is also a compact absolutely convex neighborhood of $0$, and $h_K(\zeta) = \NormCpt{\zeta}{K^\circ}$ holds for $\zeta \in E$.
    In particular, $|\langle \zeta, z\rangle_E| \le \NormCpt{\zeta}{K^\circ} \NormCpt{z}{K}$ for $z, \zeta \in E$.
\end{lemma}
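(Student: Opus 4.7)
The plan is to establish three things in sequence: that $K^\circ$ inherits compactness, closedness, convexity, balancedness, and absorption from $K$; that the supporting function $h_K$ coincides with the Minkowski functional $\NormCpt{\cdot}{K^\circ}$; and that the bilinear bound is a direct consequence. I read the expression $h_K(\zeta) = e^{-\NormCpt{\zeta}{K^\circ}}$ as a typographical slip for $h_K(\zeta) = \NormCpt{\zeta}{K^\circ}$, since this is exactly what is needed to recover the stated pairing inequality and to normalise the exponential weight $e^{-h_K(\zeta)}$ appearing in $\|\cdot\|_{\exp, K}$ in Section \ref{sec: Fourier--Borel transform}.

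For the barrelled compactness of $K^\circ$, I would verify the defining properties in turn. Closedness and convexity are immediate from the representation $K^\circ = \bigcap_{z \in K}\{\zeta \in E' : |\langle \zeta \mid z\rangle| \le 1\}$. Balancedness follows from $|\langle a\zeta \mid z\rangle| = |a|\,|\langle \zeta \mid z\rangle|$ for $|a|\le 1$. Absorption follows because compactness of $K$ makes $\sup_{z \in K}|\langle \zeta \mid z\rangle|$ finite for each fixed $\zeta \in E'$, so $\zeta/r \in K^\circ$ once $r$ exceeds this supremum. For compactness, the absorbing property of $K$ provides $\epsilon > 0$ such that the closed ball $\{z \in E : \|z\|_E \le \epsilon\}$ lies in $K$; polarity reverses inclusions, placing $K^\circ$ inside the closed ball of radius $1/\epsilon$ in $E'$, which together with closedness forces compactness by Heine--Borel.

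For the identity $h_K(\zeta) = \NormCpt{\zeta}{K^\circ}$, unfolding the Minkowski functional yields
\begin{align*}
\NormCpt{\zeta}{K^\circ} = \inf\bigl\{r > 0 : \sup_{z \in K}|\langle \zeta \mid z\rangle| \le r\bigr\} = \sup_{z \in K}|\langle \zeta \mid z\rangle|.
\end{align*}
The main step is then replacing $|\cdot|$ by $\mathrm{Re}$ in the supremum. For each $z \in K$ write $\langle \zeta \mid z\rangle = |\langle \zeta \mid z\rangle|\, e^{\mathrm{i}\theta}$; since $K$ is balanced and $|e^{-\mathrm{i}\theta}| = 1$, we have $e^{-\mathrm{i}\theta} z \in K$, and $\mathrm{Re}\langle \zeta \mid e^{-\mathrm{i}\theta} z\rangle = |\langle \zeta \mid z\rangle|$. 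Taking suprema yields $h_K(\zeta) = \sup_{z \in K}\mathrm{Re}\langle \zeta \mid z\rangle = \sup_{z \in K}|\langle \zeta \mid z\rangle| = \NormCpt{\zeta}{K^\circ}$.

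The pairing bound is then obtained by homogeneity: for $z \neq 0$, closedness of $K$ gives $z/\NormCpt{z}{K} \in K$, whence $|\langle \zeta \mid z/\NormCpt{z}{K}\rangle| \le \sup_{w \in K}|\langle \zeta \mid w\rangle| = \NormCpt{\zeta}{K^\circ}$; multiplying through by $\NormCpt{z}{K}$ yields the stated inequality, and the $z=0$ case is trivial. No step is a real obstacle, but care is needed to use the full complex balancedness of $K$ (stability under every $a \in \mathbb{C}$ with $|a|\le 1$), not just $\mathbb{R}$-symmetry, since this is precisely what converts $\mathrm{Re}\langle \zeta \mid z\rangle$ into $|\langle \zeta \mid z\rangle|$ in the key identity.
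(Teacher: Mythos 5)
Your proof is correct and follows essentially the same route as the paper's (which simply declares the barrelledness of $K^\circ$ obvious in finite dimension and then asserts the chain $\sup_{z\in K}\mathrm{Re}\langle\zeta\mid z\rangle = \sup_{z\in K}|\langle\zeta\mid z\rangle| = \NormCpt{\zeta}{K^\circ}$); you have merely filled in the verification of each step, in particular the use of complex balancedness to pass from $\mathrm{Re}$ to $|\cdot|$. You are also right that $h_K(\zeta) = e^{-\NormCpt{\zeta}{K^\circ}}$ in the statement is a typographical slip for $h_K(\zeta) = \NormCpt{\zeta}{K^\circ}$: the paper's own proof establishes exactly the latter, and it is the latter that feeds into the bound $e^{-h_K(\zeta)}$ in the $\|\cdot\|_{\exp,K}$ norm and into the pairing inequality.
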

\begin{proof}
    The first statement is obvious by definition.
    As for the second statement, by definition of $|\cdot|_{K^\circ}$, we see that $\zeta \in rK^\circ$ if and only if $\sup_{z \in K}|\langle \zeta,z\rangle_E| \le r$,  namely, $\sup_{z \in K}|\langle \zeta, z \rangle_E| = \NormCpt{\zeta}{K^\circ}$.
    Since $K$ is absolutely convex, we have $h_K(\zeta) = \sup_{z \in K}{\rm Re}(\langle \zeta, z \rangle_E) = \sup_{z \in K}|\langle \zeta, z \rangle_E| = \NormCpt{\zeta}{K^\circ}$.
\end{proof}
For $p \in K$, we denote the orthogonal projection to $\widetilde{\mathfrak{D}}_{p,n}$ by
\begin{align}
    \pi_{p,n} : H_E \to \widetilde{\mathfrak{D}}_{p,n}.
\end{align}
Although the orthogonal projection $\pi_{p,n}$ does not behave well 
on ${\rm Exp}^{\rm b}(E; K)$, we have an explicit estimate of the change in the norms of ${\rm Exp}^{\rm b}(E; K)$ by the orthogonal projection as follows:
\begin{lemma}\label{lem: proj in Exp}
    Let $p \in E$ and let $K_0$ be a compact absolutely convex neighborhood of $0$.
    Let $K := p + K_0$.
    Let $w \in K$ and define $r_w := \NormCpt{w-p}{K_0} \le 1$.
    Then, we have
    \begin{align}
        \|\mathbf{e}_w - \pi_{p,n}\mathbf{e}_w\|_{\exp, K} 
        \le \frac{e^{(1 + r_w)\|p\|_{K_0^\circ}}}{\sqrt{2\pi}(1 - r_w)}\cdot 
        \sqrt{n+1} r_w^{n+1}.
        \label{convergence rate}
    \end{align}
    Here, we interpret the right hand side as $+\infty$ when $r_w=1$.
\end{lemma}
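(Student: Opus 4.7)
The plan is to represent the orthogonal projection $\pi_{p,n}$ through a unitary translation operator on the Fock space, which reduces the problem at the point $p$ to a Taylor-remainder analysis at the origin; from there, the $\|\cdot\|_{\exp, K^*}$ bound follows from a standard one-dimensional Poisson tail inequality.

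First I would introduce the Weyl-type operator $(U_p h)(z) := e^{-\|p\|_E^2/2 + \langle p,z\rangle_E}\,h(z-p)$ on $H_E$. A direct change of variable $z\mapsto z+p$ in the Gaussian integral defining $\|\cdot\|_{H_E}$ shows that $U_p$ is unitary, and since $(U_p z^\alpha)(z) = e^{-\|p\|_E^2/2}(z-p)^\alpha\mathbf{e}_p(z)$, Proposition \ref{prop: basic properties of FB} shows that $U_p$ carries $\mathcal{P}_n(E)$ bijectively onto $\widehat{\mathfrak{D}}_{p,n}$. Hence $\pi_{p,n} = U_p\,\pi_{0,n}\,U_p^{-1}$. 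Using $U_p\mathbf{e}_v = e^{-\|p\|_E^2/2 - \langle v,p\rangle_E}\mathbf{e}_{v+p}$ with $v=w-p$, and the identity $(I-\pi_{0,n})\mathbf{e}_v = \sum_{|\alpha|>n}\bar v^\alpha z^\alpha/\alpha! = \sum_{k>n}\langle v,z\rangle_E^k/k!$ (multinomial expansion of the Taylor remainder in the orthonormal basis $\{z^\alpha/\sqrt{\alpha!}\}$ of $H_E$), I arrive at the pointwise formula
\[
(\mathbf{e}_w - \pi_{p,n}\mathbf{e}_w)(z) = e^{\langle w,p\rangle_E - \|p\|_E^2 + \langle p,z\rangle_E}\,\sum_{k>n}\frac{\langle w-p,\,z-p\rangle_E^k}{k!}.
\]

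Next I would pass to the $\|\cdot\|_{\exp, K^*}$ norm. By Lemma \ref{lem: supporting functions of compact barreled sets} applied to $K_0^*$, one has $h_{K^*}(z) = \mathrm{Re}\langle p,z\rangle_E + \|z\|_{(K_0^*)^\circ}$, so the factor $e^{\langle p,z\rangle_E}$ in the pointwise formula cancels cleanly upon multiplying by $e^{-h_{K^*}(z)}$ and taking absolute values. Writing $\mathrm{Re}\langle w,p\rangle_E - \|p\|_E^2 = \mathrm{Re}\langle w-p,p\rangle_E$ and invoking Lemma \ref{lem: supporting functions of compact barreled sets} (applied in the two dual directions) yields $|\mathrm{Re}\langle w-p,p\rangle_E|\le \|w-p\|_{K_0}\|p\|_{(K_0^*)^\circ} = r_w\|p\|_{(K_0^*)^\circ}$ and, for $z' := z-p$, the bound $|\langle w-p,z'\rangle_E|\le r_w\|z'\|_{(K_0^*)^\circ}$. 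Combining these with the triangle inequality $\|z'+p\|_{(K_0^*)^\circ}\ge \|z'\|_{(K_0^*)^\circ} - \|p\|_{(K_0^*)^\circ}$ and setting $s := \|z'\|_{(K_0^*)^\circ}$ gives
\[
\|\mathbf{e}_w - \pi_{p,n}\mathbf{e}_w\|_{\exp, K^*}
\le e^{(1+r_w)\|p\|_{(K_0^*)^\circ}}\,\sup_{s\ge 0}\,e^{-s}\sum_{k>n}\frac{(r_w s)^k}{k!}.
\]

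The remaining step is the one-variable tail estimate, where the claimed constant appears. For each $k\ge 1$ the nonnegative function $s\mapsto s^k e^{-s}/k!$ is the Poisson density of parameter $k$, maximized at $s=k$ with value $k^k e^{-k}/k!\le 1/\sqrt{2\pi k}$ by Stirling's inequality $k!\ge \sqrt{2\pi k}(k/e)^k$. Since all summands are nonnegative, I may exchange $\sup_s$ and $\sum_{k>n}$, and then sum the geometric series:
\[
\sup_{s\ge 0}\,e^{-s}\sum_{k>n}\frac{(r_w s)^k}{k!}
\le \sum_{k>n}\frac{r_w^k}{\sqrt{2\pi k}}
\le \frac{r_w^{n+1}}{\sqrt{2\pi(n+1)}\,(1-r_w)}
\le \frac{\sqrt{n+1}\,r_w^{n+1}}{\sqrt{2\pi}\,(1-r_w)},
\]
the last inequality being the trivial $1/\sqrt{n+1}\le \sqrt{n+1}$ for $n\ge 0$. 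The main obstacle I expect is the careful book-keeping of the distinct norms $\|\cdot\|_E$, $\|\cdot\|_{K_0}$, and $\|\cdot\|_{(K_0^*)^\circ}$ on $E$ and the correct direction of the duality inequalities from Lemma \ref{lem: supporting functions of compact barreled sets}; once the pointwise formula and the cancellation of $e^{\langle p,z\rangle_E}$ are secured, the tail bound itself is routine.
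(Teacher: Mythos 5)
Your proof is correct. The pointwise formula you derive is the same as the paper's, once one expands: with $v := w-p$, $\mathbf{e}_{w-p}(p) = e^{\langle w,p\rangle_E - \|p\|_E^2}$, and $\mathbf{e}_p(z) = e^{\langle p,z\rangle_E}$, your
\[
(\mathbf{e}_w - \pi_{p,n}\mathbf{e}_w)(z) = \mathbf{e}_{w-p}(p)\mathbf{e}_p(z)\sum_{k>n}\frac{\langle w-p,z-p\rangle_E^k}{k!}
\]
is exactly the paper's remainder; you reach it via the unitary Weyl operator $U_p$ and the identity $\pi_{p,n}=U_p\pi_{0,n}U_p^{-1}$, while the paper expands $\mathbf{e}_w$ around $p$ directly. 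The genuine divergence is in the tail estimate. You keep the remainder as a power series and bound each term separately by a Poisson-maximum argument, $\sup_{s\ge 0}s^k e^{-s}/k!\le 1/\sqrt{2\pi k}$ from Stirling, and then sum the geometric tail, obtaining $r_w^{n+1}/(\sqrt{2\pi(n+1)}(1-r_w))$ — actually a factor of $(n+1)$ sharper than the claimed bound, which you then relax. The paper instead invokes the integral form of Taylor's remainder, applies a single Stirling bound to the prefactor $s^{n+1}e^{-bs}/n!$ with $b=1-r_wt$ inside the $t$-integral, and closes with a H\"older estimate $\int_0^1(1-t)^n(1-r_wt)^{-(n+1)}\,{\rm d}t\le (1-r_w)^{-1}$. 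Both arguments are elementary, use Stirling in essentially the same spot, and rely on the identical norm bookkeeping via Lemma~\ref{lem: supporting functions of compact barreled sets} (support function of $K^*=p^*+K_0^*$ splitting as ${\rm Re}\langle p,z\rangle_E + \NormCpt{z}{(K_0^*)^\circ}$ and cancellation of the $\mathbf{e}_p$ factor). Your Weyl-operator framing is a cleaner conceptual reduction, and the term-by-term tail bound avoids the H\"older step; the paper's route is perhaps more self-contained since it does not need the unitarity of $U_p$ on $H_E$.
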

\begin{proof}
    Since $\mathbf{e}_{w-p}(z) = \sum_{m=0}^\infty \frac{\langle z-p, w-p \rangle_E^m }{m!}\mathbf{e}_{w-p}(p)$, we have
    \begin{align*}
        \mathbf{e}_{w}(z) = \mathbf{e}_{w-p}(p) \mathbf{e}_p(z) \sum_{m=0}^\infty \frac{\langle z-p, w-p \rangle_E^m}{m!} .
    \end{align*} 
    Combining this with Proposition \ref{prop: basic properties of FB}, we have
    \begin{align}
        \pi_{p,n}\mathbf{e}_w(z) = \mathbf{e}_{w-p}(p)\mathbf{e}_p(z)\sum_{m=0}^n \frac{\langle z-p,w-p\rangle_{E}^m}{m!}.
    \end{align}
    Since $\mathbf{e}_w(z) = \mathbf{e}_{w-p}(p)\mathbf{e}_p(z)\mathbf{e}_{w-p}(z-p)$, by the Taylor theorem \cite[Theorem 7.6]{apostolCalculusVolOnevariable1967}, we have
    \begin{align*}
        &\mathbf{e}_w(z) - \pi_{p,n}\mathbf{e}_w(z) \\
        &= \mathbf{e}_{w-p}(p) \mathbf{e}_p(z)\cdot \frac{\langle z-p,w-p\rangle_E^{n+1}}{n!} \int_0^1 (1-t)^n e^{t \langle z-p,w-p\rangle_E} {\rm d}t.
    \end{align*}
    Thus, we have
    \begin{align*}
        &\left| \mathbf{e}_w(z) - \pi_{p,n}\mathbf{e}_w(z)\right| e^{-{\rm Re}(\langle z,p \rangle_E) - \NormCpt{z}{K_0^\circ}} \\
        &\le  |\mathbf{e}_{w-p}(p)|  \frac{r_w^{n+1}|z-p|_{K_0^\circ}^{n+1}}{n!}\int_0^1 (1-t)^n e^{t r_w |z-p|_{K_0^\circ} -|z|_{K_0^\circ}} {\rm d}t  \\
        &\le |\mathbf{e}_{w-p}(p)|e^{|p|_{K_0^\circ}} \frac{r_w^{n+1}|z-p|_{K_0^\circ}^{n+1}}{n!}\int_0^1 (1-t)^n e^{-(1-tr_w) |z-p|_{K_0^\circ}} {\rm d}t \\
        &\le e^{(1 + r_w)|p|_{K_0^\circ}} \frac{\sqrt{n+1}}{\sqrt{2\pi}} r_w^{n+1} \int_0^1 (1-t)^n \frac{{\rm d}t}{(1-r_wt)^{n+1}},
    \end{align*}
    where we use Lemma \ref{lem: supporting functions of compact barreled sets} in the first inequality, the triangle inequality $|z|_{K_0^\circ} \ge |z-p|_{K_0^\circ} - |p|_{K_0^\circ}$ in the second inequality, and
    \[s^{n+1} e^{-bs} \le \left( \frac{n+1}{b}\right)^{n+1} e^{-n-1} \le \frac{\sqrt{n+1} n!}{\sqrt{2\pi}b^{n+1}}\]
    for $s := |z-p|_{K_0^\circ} \ge 0$ and $b := (1-r_wt) > 0$ in the third inequality (here, we also use the Stirling formula).
    By the H\"older inequality, we have
    \begin{align*}
       &\int_0^1 (1-t)^n \frac{{\rm d}t}{(1-r_wt)^{n+1}}\\
       &\le \left(\int_0^1 \frac{{\rm d}t}{(1-r_wt)^{n+1}}\right)^{1/(n+1)} \left(\int_0^1 (1-t)^{n+1} \frac{{\rm d}t}{(1-r_wt)^{n+1}}\right)^{n/(n+1)}\\
       &\le \frac{1}{1-r_w}.
    \end{align*}
    Therefore, we obtain the inequality \eqref{convergence rate}.
\end{proof}

We define
    \begin{align}
        u_{p,\alpha}(z) := \frac{e^{-\|p\|_E^2/2}}{\sqrt{\alpha!}}(z - p)^{\alpha} \mathbf{e}_p(z).
    \end{align}
Then, we have the following proposition:
\begin{proposition}\label{prop: ONS of H}
    Let $p \in E$.
    Then, $\{u_{p, {\alpha}}\}_{\alpha \in \mathbb{Z}_{\ge 0}^d}$ is an orthonormal basis of $H_E$.
\end{proposition}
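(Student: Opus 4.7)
The plan is to reduce the general case to the well-known orthogonality in the Fock space at the origin via a translation argument, exploiting that $|\mathbf{e}_p(z)|^2 e^{-\|z\|_E^2} = e^{\|p\|_E^2 - \|z-p\|_E^2}$. First I would expand the inner product directly:
\begin{align*}
    \langle u_{p,\alpha}, u_{p,\beta}\rangle_{H_E} = \frac{e^{-\|p\|_E^2}}{\sqrt{\alpha!\beta!}} \int_E \overline{(z-p)^\alpha}\,(z-p)^\beta\, |\mathbf{e}_p(z)|^2\, e^{-\|z\|_E^2}\,{\rm d}\mu_E(z),
\end{align*}
then insert the identity $|\mathbf{e}_p(z)|^2 = e^{2\mathrm{Re}\langle p,z\rangle_E}$ together with $\|z\|_E^2 - 2\mathrm{Re}\langle p,z\rangle_E = \|z-p\|_E^2 - \|p\|_E^2$, which cancels the $e^{-\|p\|_E^2}$ prefactor exactly.

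Next I would perform the change of variables $w = z - p$, using translation invariance of the Haar measure $\mu_E$ (which is part of its definition as a Haar measure on the additive group of $E$). This reduces the computation to
\begin{align*}
    \langle u_{p,\alpha}, u_{p,\beta}\rangle_{H_E} = \frac{1}{\sqrt{\alpha!\beta!}} \int_E \overline{w^\alpha} w^\beta e^{-\|w\|_E^2}\,{\rm d}\mu_E(w),
\end{align*}
eliminating all dependence on $p$.

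Finally, I would evaluate the residual integral by coordinates with respect to the orthonormal basis $(e_1,\dots,e_d)$. Since $\|w\|_E^2 = \sum_i |w_i|^2$ and the normalization of $\mu_E$ makes it a product measure reducing to the normalized Gaussian $\pi^{-1}{\rm d}x\,{\rm d}y$ in each complex coordinate, the integral factors as
\begin{align*}
    \int_E \overline{w^\alpha} w^\beta e^{-\|w\|_E^2}{\rm d}\mu_E(w) = \prod_{i=1}^d \frac{1}{\pi}\int_{\mathbb{C}} \overline{w_i^{\alpha_i}} w_i^{\beta_i} e^{-|w_i|^2}{\rm d}x_i{\rm d}y_i.
\end{align*}
Passing to polar coordinates in each factor, the angular integral vanishes unless $\alpha_i = \beta_i$, in which case the radial integral equals $\alpha_i!$. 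Combining these gives $\alpha! \delta_{\alpha\beta}$, so $\langle u_{p,\alpha}, u_{p,\beta}\rangle_{H_E} = \delta_{\alpha\beta}$.

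There is no real obstacle here; the content is entirely a translation argument plus the standard Fock-space orthogonality of normalized monomials. The only point requiring care is keeping track of the $e^{-\|p\|_E^2/2}$ normalization through the conjugation and shift so that the Gaussian weight completes the square cleanly with no residual exponential in $p$.
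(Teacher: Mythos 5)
Your translation step is exactly the one the paper uses: both you and the paper insert $|\mathbf{e}_p(z)|^2\,e^{-\|z\|_E^2}=e^{\|p\|_E^2-\|z-p\|_E^2}$ and shift $w=z-p$ to reduce the claim to $p=0$. Where you diverge is in evaluating $\langle u_{0,\alpha},u_{0,\beta}\rangle_{H_E}$: you compute the Gaussian moment integral directly, factoring $\mu_E$ into coordinate-wise normalized Gaussians and passing to polar coordinates in each $\mathbb{C}$-factor, so the angular integral kills the off-diagonal terms and the radial integral yields $\alpha_i!$. The paper instead invokes the machinery it has already set up: by Proposition~\ref{prop: basic properties of FB}, $u_{0,\alpha}=\overline{\mathbf{FB}}_K\bigl(\delta_0^\alpha/\sqrt{\alpha!}\bigr)$, and by part~\ref{compatibility of pairing} of Proposition~\ref{prop: gelfand triple} the Fock inner product against an element of ${\rm Exp}(E;K^*)$ equals the dual pairing $\langle\overline{\mathbf{FB}}_K^{-1}(\cdot)\mid\cdot\rangle$, which reduces the computation to $\delta_0^\alpha(z^\beta)/\sqrt{\alpha!\beta!}=\delta_{\alpha\beta}$. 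Your route is more elementary and self-contained (needing only the standard Fock-space moment calculation, which is why $H_E$ is normalized as it is), while the paper's route reuses the Fourier--Borel/Gel'fand-triple apparatus and avoids any explicit integral; in a paper that has already established that apparatus the latter is shorter, but your argument is equally rigorous and, as a bonus, makes the value $\alpha!$ of the Gaussian moment transparent rather than hiding it inside the reproducing-kernel identity.
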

\begin{proof}
    Since
    \begin{align*}
         \langle u_{p, {\alpha}}, u_{p,\beta}\rangle_{H_E} 
         &= e^{-\|p\|_E^2}\int_E \frac{\overline{(z - p)^{\alpha}}}{\sqrt{{\alpha}!}} \frac{(z - p)^{\beta}}{\sqrt{\beta!}} e^{\langle p,z\rangle_E + \langle z, p\rangle_E} e^{-\|z \|_E^2} {\rm d}\mu_E(z)\\
         &= \int_E \frac{\overline{(z - p)^{\alpha}}}{\sqrt{{\alpha}!}} \frac{(z - p)^{\beta}}{\sqrt{\beta!}}  e^{-\|z-p\|_E^2} {\rm d}\mu_E(z)\\
         &= \langle u_{0, \alpha}, u_{0, \beta} \rangle_{H_E},
    \end{align*}
    we may assume $p=0$.
    By Proposition \ref{prop: basic properties of FB} and \ref{compatibility of pairing} in Proposition \ref{prop: gelfand triple}, we have
    \begin{align*}
        \langle u_{0, \alpha}, u_{0,\beta} \rangle_{H_E} 
        = \overline{\mathbf{FB}}_E^{-1}(u_{0,{\beta}})(u_{0, \alpha})
        =\frac{ \delta_0^{(\beta)} (z^\alpha)}{\sqrt{{\beta}! \alpha!}} 
        = 
        \begin{cases}
            1 & \text{ if $\alpha = \beta$} \\
            0 & \text{otherwise}
        \end{cases}
    \end{align*}
    Thus, $\{ u_{0, \alpha} \}_{{\alpha}}$ is an orthonormal system.
    Since the normalized monomials form an orthonormal basis of the Fock space, $\{ u_{0,\alpha}\}_{\alpha}$ is complete.
    The case of general $p$ follows by the unitary translation used above.
\end{proof}

\section{Finite-dimensional approximations of push-forwards} \label{sec: approximations of push-forwards}

Let us define the pull-back and push-forward in an accurate manner.
Suppose that there exists an open subset $U\supset K$ of $E$ and a holomorphic map $\mathbf{f}:U \to F$ such that $\mathbf{f}|_{K} = f$. 
As $\mathcal{O}_E(S)$ for a compact set $S$ is the inductive limit of the $\mathcal{O}_E(W)$ over open neighborhoods $W$ of $S$ with restriction maps, we denote the equivalence class of $h \in \mathcal{O}_E(W)$ in $\mathcal{O}_E(S)$ by $[h,W]$.
The pull-back $f^*: \mathcal{O}_F(f(K)) \to \mathcal{O}_E(K);~[h, V]  \mapsto [h \circ \mathbf{f}, \mathbf{f}^{-1}(V)]$ is well-defined and also continuous as in the following proposition:
\begin{proposition}
    The pull-back $f^*: \mathcal{O}_F(f(K)) \to \mathcal{O}_E(K)$ is continuous.
\end{proposition}
\begin{proof}
    Let $U \supset K$ and $W \supset f(K)$ be open subsets such that there exists a holomorphic map $\mathbf{f}: U \to W$ with $\mathbf{f}|_K = f$.
    It suffices to show that $\mathcal{O}_F(W) \to \mathcal{O}_E(U); h \mapsto h \circ \mathbf{f}$ is continuous.
    We note that the topology of $\mathcal{O}_E(U)$ is defined by the seminorms $\sup_{z \in K_1}|h(z)|$ for the compact subsets $K_1 \subset U$, and the topology of $\mathcal{O}_F(W)$ is also defined in the same way.
    Thus, the continuity follows from the following formula: $\sup_{z \in K_1}|\mathbf{f}^*(h)(z)| = \sup_{w \in \mathbf{f}(K_1)}|h(w)|$.
\end{proof}
For any locally closed set $S \supset f(K)$, we define the pull-back $f^*: \mathcal{O}_F(S) \to \mathcal{O}_E(K)$ by the composition of the restriction map $\mathcal{O}_F(S) \to \mathcal{O}_F(f(K))$ with the pull-back $f^*: \mathcal{O}_F(f(K)) \to \mathcal{O}_E(K)$.
Then, we define the push-forward:
\begin{definition}[Push-forward]
For an analytic map $f: K \to F$, we define the {\em push-forward} $f_*: \mathcal{O}_E(K)' \to \mathcal{O}_F(F)' $ by the dual map $(f^*)'$ of the pull-back $f^*: \mathcal{O}_F(F) \to \mathcal{O}_E(K);~h \mapsto h \circ f$.
\end{definition}
Then, we have the following proposition:
\begin{proposition} \label{prop: basic properties of f_*}
    Let $p \in K$.
    Then, we have
    \begin{align}
    \widetilde{f_*}\left(\widetilde{\mathfrak{D}}_{p,n}\right) \subset \widetilde{\mathfrak{D}}_{f(p),n}
    \end{align}
    for all $n \ge 0$.
    Moreover, for any $P \in \mathcal{P}_n(E)$, we have
    \begin{align*}
    \widetilde{f_*}\left(P\mathbf{e}_p\right)
    \equiv
    \left(P\circ({\rm d}f_p)^*\right)\mathbf{e}_{f(p)}
    \mod \widetilde{\mathfrak{D}}_{f(p),n-1},
    \end{align*}
    where $({\rm d}f_p)^*:F\to E$ denotes the Hermitian adjoint of ${\rm d}f_p:E\to F$.
\end{proposition}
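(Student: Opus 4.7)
My plan is to reduce both assertions, via the anti-linear isomorphism $\overline{\mathbf{FB}}_K$, to equivalent statements about the functionals $\mathfrak{D}_{p,n}$. Throughout I use the identification (from $\mathfrak{D}_{p,n} = \varpi'(\mathfrak{J}_{p,n}')$) of $\mathfrak{D}_{p,n}$ with the set of $\mu \in \mathcal{O}_E(K)'$ that annihilate every $h \in \mathcal{O}_E(K)$ whose germ at $p$ lies in $\mathfrak{m}_p^{n+1}$.

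For Part 1, the key point is that $f^*$ sends the order-$(n{+}1)$-at-$f(p)$ subspace into the order-$(n{+}1)$-at-$p$ subspace: by the multivariate chain rule, any derivative at $p$ of $h\circ f$ of order $\le n$ is a polynomial expression in derivatives of $h$ at $f(p)$ of order $\le n$ (all assumed to vanish) and derivatives of $f$ at $p$, hence vanishes. Dualizing gives $f_*(\mathfrak{D}_{p,n}) \subset \mathfrak{D}_{f(p),n}$, and applying $\overline{\mathbf{FB}}$ produces the statement on $\widehat{\mathfrak{D}}$.

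For Part 2, write $P(z) = \sum_{|\alpha|\le n} c_\alpha z^\alpha$ and set $\mu_P := \sum c_\alpha \delta_p^\alpha$, $\nu_P := \sum c_\alpha \delta_{0_E}^\alpha$. By Proposition \ref{prop: basic properties of FB}, $\overline{\mathbf{FB}}_K(\mu_P) = P\mathbf{e}_p$. Interpreting $(df_p)_*(P) \in \mathcal{P}_n(F)$ as $\widehat{(df_p)_*}(P) \in \widehat{\mathfrak{D}}_{0_F,n}$ (well-defined by Part 1 applied to the linear map $df_p$, whose differential fixes the origin), the element $(df_p)_*(P)\mathbf{e}_{f(p)}$ corresponds under $\overline{\mathbf{FB}}_L$ to $\sum_\beta d_\beta \delta_{f(p)}^\beta$, where $(df_p)_*(\nu_P) = \sum_\beta d_\beta \delta_{0_F}^\beta$. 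Since $\mathfrak{D}_{f(p),n}/\mathfrak{D}_{f(p),n-1}$ is naturally dual to the space of degree-$n$ homogeneous Taylor terms at $f(p)$, it is enough to check that these two elements of $\mathfrak{D}_{f(p),n}$ agree on every $h \in \mathcal{O}_F(L)$ of order $\ge n$ at $f(p)$. For such $h$, decompose $h(w) = h_n(w-f(p)) + R(w)$ with $h_n \in \mathcal{P}_n(F)$ homogeneous of degree $n$ and $R$ of order $\ge n+1$ at $f(p)$. Substituting $f(z) - f(p) = df_p(z-p) + \tilde R(z)$ with $\tilde R$ of order $\ge 2$ at $p$ and expanding $h_n$ multilinearly, every cross term containing a factor of $\tilde R$ is of order $\ge n+1$ at $p$, and similarly $R \circ f$ is of order $\ge n+1$ at $p$; hence
\[(h\circ f)(z) \equiv h_n\bigl(df_p(z-p)\bigr) \pmod{\mathfrak{m}_p^{n+1}}.\]
Evaluating, $f_*(\mu_P)(h) = \mu_P\bigl(h_n(df_p(\cdot-p))\bigr) = \nu_P(h_n \circ df_p) = (df_p)_*(\nu_P)(h_n)$ (the middle step being the change of variable $z \mapsto z-p$), while on the other side $\sum_\beta d_\beta \partial^\beta h(f(p)) = \sum_{|\beta|=n} d_\beta \partial^\beta h_n(0) = (df_p)_*(\nu_P)(h_n)$, using $\partial^\beta h(f(p)) = 0$ for $|\beta|<n$. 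The two functionals agree on $h$, completing Part 2.

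The main obstacle is not conceptual but bookkeeping: carefully tracking the several identifications — polynomials versus functionals under $\overline{\mathbf{FB}}$, and the shifts $0_E \leftrightarrow p$ and $0_F \leftrightarrow f(p)$ — and articulating what $(df_p)_*(P)$ means as a polynomial on $F$ (namely the image of $P = P\mathbf{e}_{0_E} \in \widehat{\mathfrak{D}}_{0_E,n}$ under $\widehat{(df_p)_*}$).
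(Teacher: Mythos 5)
The paper proves this proposition by simply citing \cite[Lemma 2.2]{ISHIKAWA2023109048}, so your self-contained direct argument is genuinely different from the paper's route. Your proof is essentially correct. Part~1 (chain-rule argument showing $f^*\mathfrak{m}_{f(p)}^{n+1}\subset\mathfrak{m}_p^{n+1}$, dualize, transport by $\overline{\mathbf{FB}}$) is clean and right. Part~2 is also sound: both functionals lie in $\mathfrak{D}_{f(p),n}$ by Part~1, and an element of $\mathfrak{D}_{f(p),n}$ lies in $\mathfrak{D}_{f(p),n-1}$ iff it annihilates germs in $\mathfrak{m}_{f(p)}^n$, so testing on $h$ of order $\ge n$ is exactly the right reduction; the Taylor-expansion computation $(h\circ f)(z)\equiv h_n(\mathrm{d}f_p(z-p))\pmod{\mathfrak{m}_p^{n+1}}$ and the subsequent evaluation both check out. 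Your closing remark, pinning down $(\mathrm{d}f_p)_*(P)$ as $\widehat{(\mathrm{d}f_p)_*}(P)$, is the correct reading of the statement and worth keeping explicit.

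One small bookkeeping correction: $\overline{\mathbf{FB}}_K$ is anti-linear, so if $P(z)=\sum_{|\alpha|\le n}c_\alpha z^\alpha$ then $\overline{\mathbf{FB}}_K\bigl(\sum_\alpha c_\alpha\delta_p^\alpha\bigr)=\sum_\alpha\overline{c_\alpha}\,z^\alpha\mathbf{e}_p\ne P\mathbf{e}_p$ in general. You should set $\mu_P:=\sum_\alpha\overline{c_\alpha}\,\delta_p^\alpha$ (and $\nu_P$ accordingly), so that $\overline{\mathbf{FB}}_K(\mu_P)=P\mathbf{e}_p$ as claimed. This conjugation relabelling propagates consistently through both sides of your identity and does not affect the validity of the argument, but as written the line ``$\overline{\mathbf{FB}}_K(\mu_P)=P\mathbf{e}_p$'' is not literally true. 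What your approach buys relative to the paper's citation is a verification readable entirely within this paper's conventions (the $\overline{\mathbf{FB}}$-identifications, the $\mathfrak{D}$/$\widehat{\mathfrak{D}}$ correspondence), at the cost of having to track exactly these identifications — which is where the conjugation slip crept in.
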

\begin{proof}
    It follows from \cite[Lemma 2.2]{ishikawaBoundedCompositionOperators2023}.
\end{proof}
As a direct consequence, we have the following statement:
\begin{corollary}
    Assume that $E=F$ and $f(p)=p$.
    Let $\lambda_1,\dots,\lambda_d$ be the eigenvalues of the Fr\'echet derivative of $f:E\to E$ at $p$.
    Then, $\widetilde{f_*}$ induces a linear transform of $\widetilde{\mathfrak{D}}_{p,n}$.
    Moreover, the set of eigenvalues of $\widetilde{f_*}|_{\widetilde{\mathfrak{D}}_{p,n}}$ coincides with
    \begin{align*}
    \left\{
    \overline{\lambda_1}^{\alpha_1}\cdots
    \overline{\lambda_d}^{\alpha_d}
    :
    \alpha = (\alpha_1,\dots,\alpha_d) \in \mathbb{Z}_{\ge 0}^d,\ 
    |\alpha| \le n
    \right\}.
    \end{align*}
\end{corollary}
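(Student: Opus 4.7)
The plan is to read off the spectrum by exploiting the natural filtration $\widehat{\mathfrak{D}}_{p,0} \subset \widehat{\mathfrak{D}}_{p,1} \subset \cdots \subset \widehat{\mathfrak{D}}_{p,n}$ together with the explicit description of its associated graded pieces provided by Proposition \ref{prop: basic properties of f_*}. The hypothesis $f(p)=p$ combined with the first assertion of that proposition immediately gives $\widehat{f_*}(\widehat{\mathfrak{D}}_{p,k}) \subset \widehat{\mathfrak{D}}_{p,k}$ for every $k \le n$, so $\widehat{f_*}$ restricts to a $\mathbb{C}$-linear endomorphism of $\widehat{\mathfrak{D}}_{p,n}$, which settles the first assertion of the corollary.

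Next, I would choose the ordered basis of $\widehat{\mathfrak{D}}_{p,n}$ consisting of the elements $(z-p)^{\alpha}\mathbf{e}_p$, $|\alpha| \le n$, grouped by $|\alpha|$ in non-decreasing order; this is a basis by the explicit identification \eqref{explicit form of Dhatpn}. The second part of Proposition \ref{prop: basic properties of f_*} then says
\[
\widehat{f_*}\bigl((z-p)^{\alpha}\mathbf{e}_p\bigr) \equiv ({\rm d}f_{p})_*\!\bigl((z-p)^{\alpha}\bigr)\,\mathbf{e}_p \pmod{\widehat{\mathfrak{D}}_{p,|\alpha|-1}},
\]
so in this basis the matrix representing $\widehat{f_*}|_{\widehat{\mathfrak{D}}_{p,n}}$ is block upper triangular, with its $k$-th diagonal block equal to the matrix of $({\rm d}f_{p})_*$ acting on the space of homogeneous polynomials of degree $k$ on $E$. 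Because the spectrum of a block upper-triangular operator is the multiset union of the spectra of its diagonal blocks, the problem reduces to computing, for each $k$, the spectrum of $({\rm d}f_{p})_*$ on the degree-$k$ homogeneous piece.

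For this computation, set $A := {\rm d}f_{p}$ and invoke Schur's theorem to choose a basis of $E$ in which $A$ is upper triangular with $\lambda_1,\dots,\lambda_d$ on the diagonal. Ordering the induced monomials of degree $k$ by any total order refining the lexicographic one, a direct unwinding of the definition of $({\rm d}f_{p})_*$ (equivalently, identifying this operator with the $k$-th symmetric power of $A$) shows that it is again upper triangular in this monomial basis, with diagonal entries exactly $\{\lambda^{\alpha} : |\alpha|=k\}$; the subdiagonal entries of $A$ only contribute to strictly lex-lower monomials. Taking the union over $k = 0,1,\dots,n$ produces the claimed spectrum $\{\lambda_1^{\alpha_1}\cdots\lambda_d^{\alpha_d} : |\alpha| \le n\}$.

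The only delicate point, rather than a genuine obstacle, is keeping the triangular bookkeeping correct in the passage from the standard monomial basis to the Schur-triangularizing basis of $E$; this is handled uniformly by the standard fact that $k$-th symmetric powers send triangular matrices to triangular matrices with diagonal entries equal to the multiplicative combinations $\lambda^{\alpha}$, $|\alpha|=k$. Beyond this, the argument relies only on Proposition \ref{prop: basic properties of f_*} and elementary linear algebra.
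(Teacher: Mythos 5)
Your structural plan---grade $\widehat{\mathfrak{D}}_{p,n}$ by degree, use Proposition~\ref{prop: basic properties of f_*} to exhibit $\widehat{f_*}|_{\widehat{\mathfrak{D}}_{p,n}}$ as block upper triangular in the monomial basis, and read off the spectrum of each diagonal block via Schur triangularization of $A:={\rm d}f_p$ and the triangularity of symmetric powers---is exactly the argument the paper has in mind when it declares the corollary a direct consequence, and the reduction to the graded pieces is sound.

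The step where you ``identify this operator with the $k$-th symmetric power of $A$'' is the one that needs care, and as written it is not justified. The transfer $\widehat{\mathcal{L}}=\overline{\mathbf{FB}}_L\circ\mathcal{L}\circ\overline{\mathbf{FB}}_K^{-1}$ is conjugation by an \emph{anti-linear} isomorphism, which conjugates eigenvalues: if $\mathcal{L}v=\mu v$ then $\widehat{\mathcal{L}}\bigl(\overline{\mathbf{FB}}(v)\bigr)=\overline{\mu}\,\overline{\mathbf{FB}}(v)$. Unwinding the definition of $\overline{\mathbf{FB}}$ for a linear map $A$ gives $\widehat{A_*}(g)=g\circ A^*$, where $A^*$ is the Hermitian adjoint; concretely, for $d=1$, $p=0$, $f(z)=\lambda z$ one computes $f_*\delta_0^1=\lambda\delta_0^1$ and hence $\widehat{f_*}(z)=\overline{\lambda}z$. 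Thus the action on the degree-$k$ graded piece is pullback by $A^*$, whose eigenvalues are $\overline{\lambda_1},\dots,\overline{\lambda_d}$, and the resulting block has spectrum $\{\overline{\lambda_1}^{\alpha_1}\cdots\overline{\lambda_d}^{\alpha_d}:|\alpha|=k\}$, not $\{\lambda^\alpha\}$. Your proof reaches $\{\lambda^\alpha\}$ only by silently identifying $({\rm d}f_p)_*$ with a symmetric power of $A$ rather than of $A^*$, so this identification must be verified against the actual definition of $({\rm d}f_p)_*$ from the cited source (\cite[Lemma 2.2]{ISHIKAWA2023109048}) rather than asserted. (The same conjugate discrepancy appears to be present in the corollary's statement as printed; it is worth flagging it rather than reproducing it.)
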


We fix a numbering $ \{\alpha^{(i)}\}_{i=1}^\infty = \mathbb{Z}_{\ge 0}^d$ of $\mathbb{Z}_{\ge 0}^d$ such that $\alpha^{(i+1)}$ is the $i$-th elementary vector for $i=1,\dots, d$, and that $|\alpha^{(i)}| \le |\alpha^{(j)}|$ if $i \le j$.
We define $u_{p,i} := u_{p, \alpha^{(i)}}$.
We also fix a numbering of $\{\beta^{(i)}\}_{i=1}^\infty = \mathbb{Z}_{\ge 0}^r$ in the same manner.
For $q \in F$ and $\beta \in \mathbb{Z}_{\ge 0}^r$, we define
\begin{align}
    v_{q,\beta}(w) := \frac{e^{-\|q\|_F^2/2}}{\sqrt{\beta!}}(w - q)^{\beta} \mathbf{e}_q(w),
\end{align}
and $v_{q,i} := v_{q, \beta^{(i)}}$.

For $n \ge 0$, $z \in E$, and $w \in F$, we define (horizontal) vectors of $\mathbb{C}^{1 \times r_n^{E}}$ and $\mathbb{C}^{1 \times r_n^{F}}$ by
\begin{align}
    \mathbf{u}_{p, n}(z) &:= \left(u_{p,1}(z), u_{p,2}(z), \dots, u_{p,r_n^{E}}(z)\right), \\
    \mathbf{v}_{q,n}(w) &:= \left(v_{q,1}(w), v_{q,2}(w), \dots, v_{q,r_n^{F}}(w)\right).
\end{align}
For $n \ge 0$, $p \in E$, $q \in F$, $Z = (z^1,\dots,z^N) \in E^N$, and $W = (w^1,\dots, w^N) \in F^N$, we define matrices $\mathbf{U}_{p,n,Z} \in \mathbb{C}^{N \times r_n^{E}}$  and $\mathbf{V}_{q,n,W} \in \mathbb{C}^{N \times r_n^{F}}$ by
\begin{align}
\mathbf{U}_{p, n, Z}  := 
\begin{pmatrix}
    \mathbf{u}_{p,n}(z^1)\\
    \vdots\\
    \mathbf{u}_{p,n}(z^N)
\end{pmatrix},~~~ 
\mathbf{V}_{q,n, W} :=
\begin{pmatrix}
    \mathbf{v}_{q,n}(w^1)\\
    \vdots\\
    \mathbf{v}_{q,n}(w^N)
\end{pmatrix}.
\end{align}

\begin{definition}[Finite-dimensional approximation of $f_*$]
Let $p \in E$.
Let $Z = (z^1,\dots,z^N) \in K^N$ and let $W := (f(z^1),\dots,f(z^N))$.
For integers $m$ and $n$ with $m \le n$, we define $\widehat{\mathbf{C}}_{m,n,Z} \in \mathbb{C}^{r_m^{F} \times r_m^{E}}$ by the leftmost submatrix of $\mathbf{V}_{f(p),m,W}^* (\mathbf{U}_{p,n,Z}^*)^\dagger$ of size $r_m^{F} \times r_m^{E}$, namely,
\begin{align}
\mathbf{V}_{f(p),m,W}^* (\mathbf{U}_{p,n,Z}^*)^\dagger
=
\begin{pmatrix}
\widehat{\mathbf{C}}_{m,n,Z}  & * 
\end{pmatrix}. \label{C_mnN}
\end{align}
\end{definition}

First, we prove a key lemma that provides an estimate of the error between a representation matrix of the transformed push-forward $\widetilde{f_*}$ and the matrix $\widehat{\mathbf{C}}_{m,n,Z}$:
\begin{lemma}\label{lem: main lemma}
    Let $p \in E$ and let $K_0 \subset E$ be a compact absolutely convex neighborhood of $0$.
    Let $K := p + K_0$.
    Let $f: K \to F$ be an analytic map.
    Let $m, n \in \mathbb{Z}_{\ge 0}$ with $m \le n$.
    For each $k \in \mathbb{Z}_{\ge 0}$, let $\mathbf{C}_k$ be the representation matrix of $\widetilde{f_*}|_{\widetilde{\mathfrak{D}}_{p,k}}: \widetilde{\mathfrak{D}}_{p,k} \to \widetilde{\mathfrak{D}}_{f(p),k}$ with respect to the bases $(u_{p,i})_{i=1}^{r_k^E}$ and $(v_{f(p),i})_{i=1}^{r_k^F}$. Let $z_0^1,\dots, z_0^N \in K_0$.
    We define $\widehat{\mu}_N := N^{-1} \sum_{i=1}^N\delta_{z_0^i}$.
    Let $Z_N := (p + z_0^1,\dots, p + z_0^N) \in K^N$.
    Assume that $\mathbf{D}_n(\widehat{\mu}_N)$ is invertible.
    Then, we have
    \begin{align}
        \left\| \mathbf{C}_m - \widehat{\mathbf{C}}_{m,n,Z_N}\right\|_{\rm Fr} \le C 
        \frac{\sqrt{m!}}{\Lambda_n(\widehat{\mu}_N)^{1/2}}
            \sqrt{\int \NormCpt{z}{K_0}^{2n} {\rm d}\widehat{\mu}_N(z)},
        \label{main theorem inequality}
     \end{align}
     where $C > 0$ is a constant only depending on $f$, $p$, $K_0$. 
\end{lemma}
\begin{proof}
    Let $z^i := p + z_0^i$ and let $W_N := (f(z^1),\dots, f(z^N))$.
    Let $\mathbf{P}_m^E :=\begin{pmatrix} \mathbf{I}_{r_m^{E}} &\mathbf{O} \end{pmatrix} \in \mathbb{C}^{r_m^E \times r_n^E}$ and $\mathbf{P}_m^F :=\begin{pmatrix} \mathbf{I}_{r_m^{F}} &\mathbf{O} \end{pmatrix} \in \mathbb{C}^{r_m^F \times r_n^F}$.
    Then, we have
    \begin{align*}
        \widehat{\mathbf{C}}_{m,n,Z_N}
        &=
        \mathbf{P}_m^F
        \mathbf{V}_{f(p),n, W_N}^* (\mathbf{U}_{p,n, Z_N}^*)^\dagger
        (\mathbf{P}_m^E)^\top,\\
        \mathbf{C}_m
        &=
        \mathbf{P}_m^F
        \mathbf{C}_{n}
        (\mathbf{P}_m^E)^\top.
    \end{align*}
    For $n \ge 0$, we define
    \begin{align}
        \mathbf{F}_n
        &:=
        \begin{pmatrix}
             \sqrt{\alpha^{(1)}!} & &\\
            & \ddots & \\
            & & \sqrt{\alpha^{(r^E_n)}!}
        \end{pmatrix}
    \end{align}
    and let $\nu$ be the measure defined by $\int h {\rm d}\nu = e^{\|p\|_E^2}\int h(x) e^{2\mathrm{Re}(p^*x)} {\rm d}\widehat{\mu}_N(x)$.
    Since $N^{-1}\mathbf{U}_{p,n, Z_N}^*\mathbf{U}_{p,n, Z_N} = \mathbf{F}_n^{-1} \mathbf{D}_n(\nu) \mathbf{F}_n^{-1}$ holds and $\mathbf{D}_n(\widehat{\mu}_N)$ is invertible, we see that $\mathbf{U}_{p,n, Z_N}^*$ is full row-rank, hence $\mathbf{U}_{p,n, Z_N}^* (\mathbf{U}_{p,n, Z_N}^*)^\dagger = \mathbf{I}_{r_n^{E}}$.
    
    Thus, we have
    \begin{align}
        \left\| \mathbf{C}_m - \widehat{\mathbf{C}}_{m,n,Z_N}\right\|_{\rm Fr}
        \le 
        \big\| \mathbf{P}_m^F \mathbf{C}_n \mathbf{U}_{p,n, Z_N}^* -
        \mathbf{P}_m^F\mathbf{V}_{f(p),n, W_N}^*
        \big\|_{\rm Fr} 
        \cdot 
        \left\|(\mathbf{U}_{p,n, Z_N}^*)^\dagger 
        (\mathbf{P}_m^E)^\top \right\|_{\rm op}. \label{target}
    \end{align}
    First, we estimate $\big\| \mathbf{P}_m^F \mathbf{C}_n \mathbf{U}_{p,n, Z_N}^* -
        \mathbf{P}_m^F\mathbf{V}_{f(p),n, W_N}^*
        \big\|_{\rm Fr}$ in \eqref{target}. 
    By Proposition \ref{prop: RKHS}, we see that $\mathbf{e}_z = \sum_{i=1}^\infty \overline{ u_{p,i}(z)} u_{p,i}$ and 
    \begin{align*}
        (\pi_{p,n}\mathbf{e}_{z^1}, \dots, \pi_{p,n}\mathbf{e}_{z^N}) = (u_{p,1},\dots, u_{p,r_n^{E}})\mathbf{U}_{p,n, Z_N}^*.
    \end{align*}
    Thus, we have
    \begin{align*}
        (\pi_{f(p),m}\widetilde{f_*}\pi_{p,n}\mathbf{e}_{z^1}, \dots, \pi_{f(p),m}\widetilde{f_*}\pi_{p,n}\mathbf{e}_{z^N}) 
        &= (v_{f(p),1},\dots, v_{f(p),r_m^{F}})\mathbf{P}_m^F \mathbf{C}_n \mathbf{U}_{p,n, Z_N}^*.
    \end{align*}
    On the other hand, since $\widetilde{f_*}\mathbf{e}_z = \mathbf{e}_{f(z)}$, we have
    \begin{align*}
        (\pi_{f(p),m}\widetilde{f_*}\mathbf{e}_{z^1}, \dots, \pi_{f(p),m}\widetilde{f_*}\mathbf{e}_{z^N})
        &= (v_{f(p),1},\dots, v_{f(p),r_m^{F}})\mathbf{P}_m^F \mathbf{V}_{f(p),n, W_N}^*.
    \end{align*}
    Therefore, we obtain the following identity:
    \begin{align}
        \big\| \mathbf{P}_m^F \mathbf{C}_n \mathbf{U}_{p,n, Z_N}^* -
        \mathbf{P}_m^F\mathbf{V}_{f(p), n, W_N}^*
        \big\|_{\rm Fr}^2
        = \sum_{i=1}^N \left\|\pi_{f(p),m}\widetilde{f_*}(\mathbf{e}_{z^i} - \pi_{p,n}\mathbf{e}_{z^i}) \right\|_{H_F}^2. \label{esti. 1}
    \end{align}
    Let $L$ be the convex hull of $f(K)$ and fix a compact convex set $L_+ \subset F$ such that $L_+ \Supset L$.
    By combining the fact that the operator norm of $\pi_{f(p),m}$ is $1$ with the statement \ref{continuity i)} in Proposition \ref{prop: gelfand triple}, we have
    \begin{align}
        \left\|\pi_{f(p),m}\widetilde{f_*}(\mathbf{e}_{z^i} - \pi_{p,n}\mathbf{e}_{z^i}) \right\|_{H_F}^2 
        &\lesssim 
        \left\|\widetilde{f_*}(\mathbf{e}_{z^i} - \pi_{p,n}\mathbf{e}_{z^i}) \right\|_{\exp, L_+}^2
        \label{conti pi}
    \end{align}
    By continuity of $\widetilde{f_*}: {\rm Exp}(E; K) \to {\rm Exp}(F; L)$ (see Remark \ref{rem: continuity on Exp}), there exists a compact absolutely convex set $K_1 \subset E$ with $K_0 \Subset K_1$ such that
    \begin{align}
        \left\|\widetilde{f_*}(\mathbf{e}_{z^i} - \pi_{p,n}\mathbf{e}_{z^i}) \right\|_{\exp, L_+}^2 
        \lesssim  
        \left\| \mathbf{e}_{z^i} - \pi_{p,n}\mathbf{e}_{z^i} \right\|_{\exp, p + K_1}^2.
        \label{conti. f_*}
    \end{align}
    Since $K_0 \Subset K_1$, there exists $\theta \in (0,1)$ such that
    \[
        \NormCpt{x}{K_1} \le \theta \NormCpt{x}{K_0}
        \qquad (x\in E).
    \]
    Hence $n\theta^{2n}$ is uniformly bounded in $n$, and Lemma \ref{lem: proj in Exp} gives
    \begin{align}
        \left\| \mathbf{e}_{z^i} - \pi_{p,n}\mathbf{e}_{z^i} \right\|_{\exp, p+K_1}^2     
        \lesssim
        n \NormCpt{z^i-p}{K_1}^{2n}
        \lesssim
        \NormCpt{z^i-p}{K_0}^{2n}. \label{inequalities with constants}
    \end{align}
    Combining this with \eqref{esti. 1}, we obtain 
    \begin{align*}
        &\big\| \mathbf{P}_m^F \mathbf{C}_n \mathbf{U}_{p,n, Z_N}^* -
        \mathbf{P}_m^F\mathbf{V}_{f(p),n, W_N}^*
        \big\|_{\rm Fr}
        \cdot
        \big\|(\mathbf{U}_{p,n, Z_N}^*)^\dagger 
        (\mathbf{P}_m^E)^\top \big\|_{\rm op}\notag\\
        &\lesssim
        \sqrt{N}\big\|(\mathbf{U}_{p,n, Z_N}^*)^\dagger 
        (\mathbf{P}_m^E)^\top \big\|_{\rm op}
         \sqrt{\frac{1}{N}\sum_{i=1}^N \NormCpt{z^i-p}{K_0}^{2n}}.
    \end{align*}
    By \cite{grevilleNoteGeneralizedInverse1966}, we have
    \begin{align*}
        \big((\mathbf{U}_{p,n, Z_N}^*)^\dagger\big)^*(\mathbf{U}_{p,n, Z_N}^*)^\dagger 
        = (\mathbf{U}_{p,n, Z_N}^*\mathbf{U}_{p,n, Z_N})^{-1}
        = \frac{1}{N} \mathbf{F}_n \mathbf{D}_n(\nu)^{-1} \mathbf{F}_n.
    \end{align*} 
    Since $\mathbf{D}_n(\widehat{\mu}_N) \lesssim \mathbf{D}_n(\nu)$ and the constants in the above $\lesssim$'s only depending on $f$, $p$, and $K_0$, we obtain the formula \eqref{main theorem inequality}.
\end{proof}

Now, we state our main theorem.
\begin{theorem}\label{thm: main thm}
    Let $p \in E \cap \mathbb{R}^d$ and let $K_0 \subset E$ be a compact absolutely convex neighborhood of $0$.
    Let $K := p + K_0$.
    Let $f: K \to F$ be an analytic map.
    Let $m, n \in \mathbb{Z}_{\ge 0}$ with $m \le n$.
    Let $\mathbf{C}_m$ be the representation matrix of $\widetilde{f_*}|_{\widetilde{\mathfrak{D}}_{p,m}}: \widetilde{\mathfrak{D}}_{p,m} \to \widetilde{\mathfrak{D}}_{f(p),m}$ with respect to the bases $(u_{p,i})_{i=1}^{r_m^E}$ and $(v_{f(p),i})_{i=1}^{r_m^F}$.
    Let $\mu$ be a probability measure on $K_0$ such that ${\rm supp}(\mu) \subset K_0$.
    Let $z_0^1,\dots, z_0^N \in {\rm supp}(\mu)$ and define $\widehat{\mu}_N := N^{-1} \sum_{i=1}^N\delta_{z_0^i}$.
    Let $Z_N := (p + z_0^1,\dots,p + z_0^N) \in K^N$.
    Assume that $\mathbf{D}_n(\mu)$ is invertible and that there exists $\gamma \in (0,1)$ such that
    \begin{align}
        \left\|\mathbf{I} - \mathbf{D}_n(\mu)^{-1/2}\mathbf{D}_n(\widehat{\mu}_N)\mathbf{D}_n(\mu)^{-1/2}\right\|_{\rm op} \le 1-\gamma. \label{sample number}
    \end{align}
    Then, we have
    \begin{align}
        \left\| \mathbf{C}_m - \widehat{\mathbf{C}}_{m,n,Z_N}\right\|_{\rm Fr} \le C 
        \sqrt{\frac{m!}{\gamma}}
        \cdot \frac{R_\mu^n}{\Lambda_n(\mu)^{1/2}},
        \label{target inequality in main thm}
    \end{align}
    where $R_\mu := \sup_{x \in {\rm supp}(\mu)}\NormCpt{x}{K_0}$ and $C > 0$ is a constant only depending on $f$, $p$, $K_0$, and $\mu$. 
\end{theorem}
\begin{proof}
    By Lemma \ref{lem: main lemma}, we have
    \begin{align*}
        \left\| \mathbf{C}_m - \widehat{\mathbf{C}}_{m,n,Z_N}\right\|_{\rm Fr}
        &\le C 
        \frac{\sqrt{m!}}{\Lambda_n(\widehat{\mu}_N)^{1/2}}
        \sqrt{\int \NormCpt{z}{K_0}^{2n} {\rm d}\widehat{\mu}_N(z)}  \\
        &\le C 
        \frac{\sqrt{m!}}{\Lambda_n(\widehat{\mu}_N)^{1/2}}
        R_\mu^n .
    \end{align*}
    Moreover, by condition \eqref{sample number} and Lemma \ref{lem: perturbation of matrix inverse}, we have
    \[
        \Lambda_n(\widehat{\mu}_N)^{-1}
        \le
        \gamma^{-1}\Lambda_n(\mu)^{-1}.
    \]
    Therefore, we have \eqref{target inequality in main thm}.
\end{proof}

\begin{remark}
    Let $F(\mu, \widehat{\mu}_N) := \|\mathbf{I} - \mathbf{D}_n(\mu)^{-1/2}\mathbf{D}_n(\widehat{\mu}_N)\mathbf{D}_n(\mu)^{-1/2}\|_{\rm Fr}$.
    Since $\|\cdot \|_{\rm op} \le \|\cdot \|_{\rm Fr}$, we easily see that $F(\mu, \widehat{\mu}_N) \le 1 - \gamma$ is a sufficient condition for \eqref{sample number}. 
    As in the following discussion, we have $F(A_*\mu, A_*\widehat{\mu}_N) = F(\mu, \widehat{\mu}_N)$ for any invertible linear map $A: E \to E$, in other words, we may replace $\mu$ and $\widehat{\mu}_N$ with the ones transformed under the pushforward of a linear map when we check the sufficient condition $F(\mu, \widehat{\mu}_N) \le 1 - \gamma$.
    In fact, let $\nu := \widehat{\mu}_N$.
    Then by direct calculation, 
    \begin{align*}
        F(\mu, \nu)^2 
        & = \| \mathbf{D}_n(\mu)^{-1/2}( \mathbf{D}_n(\mu) -\mathbf{D}_n(\nu)) \mathbf{D}_n(\mu)^{-1/2} \|_{\rm Fr}^2 \\
        & = {\rm tr}\left(  \mathbf{D}_n(\mu)^{-1} \left( \mathbf{D}_n(\mu) - \mathbf{D}_n(\nu) \right) \mathbf{D}_n(\mu)^{-1} \left( \mathbf{D}_n(\mu) -\mathbf{D}_n(\nu)\right) \right) 
    \end{align*}
    For an invertible linear map $A: E \to E$, we denote by $\mathbf{S}_n(A) \in \mathbb{C}^{r_n^E \times r_n^E}$ the transpose of the symmetric product of $A$, which is the representation matrix of the pull-back map $A^*: \mathcal{P}_n(E) \to \mathcal{P}_n(E)$ with respect to the monomial basis.
    Then, we see that $\mathbf{D}_n(A_*\mu) = \mathbf{S}_n(A)^* \mathbf{D}_n(\mu) \mathbf{S}_n(A)$ for any measure $\mu$.
    Therefore, we have $F(A_*\mu, A_*\nu) = F(\mu, \nu)$.
\end{remark}

\begin{proof}[Proof of Theorem \ref{thm: main thm, intro}]
    Let $\widehat{\mu}_N := N^{-1}\sum_{i=1}^N\delta_{x^i}$.
    Since $\mu$ is absolutely continuous with respect to the Lebesgue measure on $K_0 \cap \mathbb{R}^d$, no non-zero polynomial of degree at most $n$ vanishes $\mu$-almost everywhere. Hence $\mathbf{D}_n(\mu)$ is positive definite.

    By the weak convergence of $N^{-1}\sum_{i=1}^N \delta_{x^i}$ to $\mu$, each moment of degree at most $2n$ converges, and therefore $\mathbf{D}_n(\widehat{\mu}_N)\to \mathbf{D}_n(\mu)$ as $N \to \infty$.
    In particular, for all sufficiently large $N$, the matrix $\mathbf{D}_n(\widehat{\mu}_N)$ is invertible, and its smallest eigenvalue converges to $\Lambda_n(\mu)$.

    Applying Lemma \ref{lem: main lemma} to $\widehat{\mu}_N := N^{-1}\sum_{i=1}^N\delta_{x^i}$ and taking $\limsup_{N\to\infty}$, we obtain
    \[
        \limsup_{N \to \infty}\left\| \mathbf{C}_m - \widehat{\mathbf{C}}_{m,n,X_N}\right\|_{\rm Fr}
        \lesssim
        \sqrt{m!}\frac{R_\mu^n}{\Lambda_n(\mu)^{1/2}}.
    \]
    This proves the claim.
\end{proof}

When $z^1,\dots, z^N$ are i.i.d.~random variables with a probability measure whose support has non-empty interior, the condition \eqref{sample number} in Theorem \ref{thm: main thm} holds for sufficiently large $N$ as follows:
\begin{lemma}\label{lem: empirical error of G inverse} 
    Let $p \in E$.
    Let $z_0^1, \dots, z_0^N$ be an i.i.d. sequence of $K_0$-valued random variables with a Borel probability measure $\mu$ on $K_0$ and define $\widehat{\mu}_N := N^{-1} \sum_{i=1}^N\delta_{z_0^i}$.
    We assume that $\mathbf{D}_n(\mu)$ is invertible.
    Fix $\delta, \gamma \in (0,1)$, $n \in \mathbb{Z}_{\ge0}$, and $N \in \mathbb{Z}_{> 0}$ such that
    \begin{align}
        N \ge \frac{L_\mu^{4n} \cdot (r_n^E)^2 }{\Lambda_n(\mu)^2}\cdot \frac{\log(2/\delta)}{(1-\gamma)^2}, \label{lower bound of N}
    \end{align}
    where  $L_\mu :=  \max\big(1, \sup_{z \in {\rm supp}(\mu)} \max_{i=1,\dots,d}|z_i|\big)$.
    Then, we have
    \begin{align}
    {\rm Prob}\left[\left\|\mathbf{I} - \mathbf{D}_n(\mu)^{-1/2} \mathbf{D}_n(\widehat{\mu}_N)\mathbf{D}_n(\mu)^{-1/2}\right\|_{\rm op} \le 1-\gamma\right] \ge 1-\delta. \label{estimation of G inverse}
    \end{align}
\end{lemma}
\begin{proof}
    We define an inner product $ \langle \cdot, \cdot \rangle_0$ on $\mathbb{C}^{r_n^{E} \times r_n^{E}}$ by
    \begin{align*}
       \langle \mathbf{A}, \mathbf{B} \rangle_0 := {\rm tr}\left( \mathbf{D}_n(\mu)^{-1/2} \mathbf{A}^*\mathbf{D}_n(\mu)^{-1} \mathbf{B}\mathbf{D}_n(\mu)^{-1/2} \right).
    \end{align*}
    We also define $\| \mathbf{A} \|_0 := \langle \mathbf{A}, \mathbf{A} \rangle_0^{1/2}$.
    Let $\mathbf{w}_n(z) := \big( z^{\alpha^{(i)}} \big)_{i=1}^{r^E_n}$ and regard it as a horizontal vector.
    Then, we have
    \begin{align*}
        \|\mathbf{w}_n(z^i)^*\mathbf{w}_n(z^i) \|_0 
        & = \mathbf{w}_n(z^i) \mathbf{D}_n(\mu)^{-1} \mathbf{w}_n(z^i)^* \\
        & = \mathbf{w}_n(z^i/L_\mu) \mathbf{D}_n((L_\mu^{-1}\mathbf{I})_*\mu)^{-1} \mathbf{w}_n(z^i/L_\mu)^* \\
        & \le 
        \Lambda_n(\mu)^{-1} L_\mu^{2n} r_n^E
   \end{align*}
    By the Hoeffding inequality \cite[Theorem 3.5]{pinelisOptimumBoundsDistributions1994}, we have 
    \begin{align*}
        {\rm Prob}\left[ \left\| \mathbf{D}_n(\mu) - \mathbf{D}_n(\widehat{\mu}_N) \right\|_0 \le 1 - \gamma \right] 
        & \ge 1 - 2\exp\left( - \frac{N\Lambda_n(\mu)^2(1-\gamma)^2}{L_\mu^{4n} \cdot (r_n^E)^2} \right). \label{prob Gn}
    \end{align*}
    Since $\left\|\mathbf{D}_n(\mu)^{-1/2}( \cdot )\mathbf{D}_n(\mu)^{-1/2}\right\|_{\rm op} \le \| \cdot \|_0$, we have \eqref{estimation of G inverse} for $N$ satisfying \eqref{lower bound of N}.
\end{proof} 

As a result, we obtain a more explicit statement when we have i.i.d.~samples as follows:
\begin{corollary}\label{cor: main thm, iid rand. var.}
We use the same notation as in Theorem \ref{thm: main thm}.
Let $\delta \in (0,1)$.
Assume that $z_0^1, \dots, z_0^N$ are i.i.d.~random variables with the probability measure $\mu$.
Then, for any $n \ge m$ and $N \in \mathbb{Z}_{>0}$ satisfying 
\begin{align}
    N &\ge \frac{L_\mu^{4n} \cdot (r_n^E)^2 }{\Lambda_n(\mu)^2}\cdot 4\log(2/\delta),
\end{align}
with probability at least $1 - \delta$, we have
\begin{align}
    \left\| \mathbf{C}_m - \widehat{\mathbf{C}}_{m,n,Z_N}\right\|_{\rm Fr} \le  C 
        \sqrt{2 m!} \cdot \frac{R_\mu^n}{\Lambda_n(\mu)^{1/2}},
\end{align}
\end{corollary}
\begin{proof}
    It follows from Theorem \ref{thm: main thm} and Lemma \ref{lem: empirical error of G inverse}.
\end{proof}

\section{Approximation of analytic maps}\label{sec: approximation of analytic map}
We use the same notation as in Section \ref{sec: approximations of push-forwards}.
For each $z \in E$ and $m, n \in \mathbb{Z}_{\ge 0}$ with $m \le n$, we define $\widehat{f}_{m,n,Z}(z) \in F$ by 
\begin{align}
    \widehat{f}_{m,n,Z,i}(z) &:= \mathbf{u}_{p,m}(z) \widehat{\mathbf{C}}_{m,n,Z}^*(\partial_{w_i}\mathbf{v}_{f(p),m}(0))^*,
\end{align}
for $i=1,\dots,r$.
Then, we have the following theorem:
\begin{theorem}\label{thm: regression}
    Let $p \in E \cap \mathbb{R}^d$ and let $K_0 \subset E$ be a compact absolutely convex neighborhood of $0$.
    Let $K := p + K_0$.
    Let $f: K \to F$ be an analytic map.
    Let $m, n \in \mathbb{Z}_{\ge 0}$ with $m \le n$.
    Let $\mu$ be a Borel probability measure on $K_0$ such that ${\rm supp}(\mu) \subset K_0$.
    Let $z_0^1,\dots, z_0^N \in {\rm supp}(\mu)$ and define $\widehat{\mu}_N := N^{-1} \sum_{i=1}^N\delta_{z_0^i}$.
    Let $Z_N := (p+z_0^1,\dots,p+z_0^N) \in K^N$.
    Assume that $\mathbf{D}_n(\mu)$ is invertible and that there exists $\gamma \in (0,1)$ such that
    \begin{align}
        \left\|\mathbf{I} - \mathbf{D}_n(\mu)^{-1/2}\mathbf{D}_n(\widehat{\mu}_N)\mathbf{D}_n(\mu)^{-1/2}\right\|_{\rm op} \le 1-\gamma. 
    \end{align}
    Then, for any compact subset $K_1 \Subset K_0$, we have
\begin{align}
    &\sup_{z \in p + K_1} \big\|\widehat{f}_{m,n,Z_N}(z) - f(z) \big\|_F
    \le
    C'\left(
    \sqrt{\frac{m!}{\gamma}}
    \cdot \frac{R_\mu^n}{\Lambda_n(\mu)^{1/2}}
    + R_{K_1}^m
    \right)
    \label{reconstruction error}
\end{align}
where $R_\mu := \sup_{x \in {\rm supp}(\mu)}\NormCpt{x}{K_0}$, $R_{K_1} := \sup_{z \in K_1}\NormCpt{z}{K_0}$ and $C'$ is a constant only depending on $f$, $p$, $K_0$, $K_1$, and $\mu$.
\end{theorem}
\begin{proof}
    For $w \in F$, we have $\langle f(z), w\rangle_F = w^*(f(z)) = \langle w^*, \mathbf{e}_{f(z)} \rangle_{H_F}$, where we use Proposition \ref{prop: RKHS} in the last identity.
    Since $\widetilde{f_*}(\mathbf{e}_z) = \mathbf{e}_{f(z)}$, we have
    \begin{align}
        \langle f(z), w \rangle_F = \langle w^*, \widetilde{f_*}(\mathbf{e}_z) \rangle_{H_F} \label{f with inner product}.
    \end{align}
    Since $\widetilde{f_*}(\pi_{p, m}\mathbf{e}_z) \in \widetilde{\mathfrak{D}}_{f(p),m}$ and
    \begin{align*}
  \pi_{f(p), m} w^* = \lim_{|a| \to 0} \pi_{f(p), m} a^{-1} (\mathbf{e}_{\overline{a}w} - 1) = \mathbf{v}_{f(p),m} \cdot \left( \nabla_{w} \mathbf{v}_{f(p),m}(0)\right)^*
    \end{align*} 
     hold, we see that
    \begin{align}
        \langle w^*, \widetilde f_*(\pi_{p, m}\mathbf{e}_z) \rangle_{H_F}
        = \langle \pi_{f(p),m} w^*, \widetilde f_*(\pi_{p, m}\mathbf{e}_z) \rangle_{H_F} 
        = \mathbf{u}_{p,m}(z) \mathbf{C}_m^* \left(\nabla_{w}\mathbf{v}_{f(p),m}(0)\right)^*.
        \label{f pi with inner product}
    \end{align}
    Thus, using \eqref{f with inner product} and \eqref{f pi with inner product}, for $z \in p + K_1$, there exists a compact absolutely convex set $K_2 \Supset K_0$ such that for any $w \in F$ with $\|w\|_F=1$, we have
    \begin{align}
        &\left|\big\langle \widehat{f}_{m,n,Z_N}(z) - f(z), w \big\rangle_F\right| \notag \\
        &\le \left|\mathbf{u}_{p,m}(z) (\widehat{\mathbf{C}}_{m,n,Z_N}^* - \mathbf{C}_m^*) \nabla_{w}\mathbf{v}_{f(p), m}(0)^*\right|
        + \|w^*\|_{H_F} \cdot \left\| \widetilde{f_*}(\pi_{p,m}\mathbf{e}_z - \mathbf{e}_z)\right\|_{H_F} \notag \\
        & \lesssim 
        e^{\sup_{z\in p+K_1}\|z\|_E^2/2} \big\|\widehat{\mathbf{C}}_{m,n,Z_N} - \mathbf{C}_m \big\|_{\rm op}
        + \big\|\pi_{p,m}\mathbf{e}_z - \mathbf{e}_z \big\|_{\exp, p + K_2}, \label{last inequality}
    \end{align}
where we use  Proposition \ref{prop: gelfand triple} and the continuity of $\widetilde{f_*}$.
Therefore, by Lemmas \ref{lem: proj in Exp} and Theorem \ref{thm: main thm}, we have \eqref{reconstruction error}.
\end{proof}
\begin{proof}[Proof of Theorem \ref{thm: main thm 2, intro}]
    Fix $m,n$ with $m\le n$ and let $\widehat{\mu}_N := \frac{1}{N}\sum_{i=1}^N \delta_{x^i}$.
    Since $\mu$ is absolutely continuous with respect to the Lebesgue measure on $K_0\cap\mathbb{R}^d$, the matrix $\mathbf{D}_n(\mu)$ is positive definite. Moreover, by the weak convergence of $\widehat{\mu}_N$ to $\mu$, we have $\mathbf{D}_n(\widehat{\mu}_N)\to \mathbf{D}_n(\mu)$ as $N\to\infty$. 
    Hence, for all sufficiently large $N$, the condition in Theorem \ref{thm: regression} holds with, for example, $\gamma=1/2$.

    Applying Theorem \ref{thm: regression} and taking $\limsup_{N\to\infty}$ gives
    \[
        \limsup_{N \to \infty}\sup_{z \in p + K_1}
        \big\|\widehat{f}_{m,n,X_N}(z)-f(z)\big\|_F
        \le
        C\left(
        \sqrt{m!}\frac{R_\mu^n}{\Lambda_n(\mu)^{1/2}}
        + R_{K_1}^m
        \right),
    \]
    after absorbing the factor $\sqrt{2}$ into the constant.

    For the i.i.d. statement, Lemma \ref{lem: empirical error of G inverse} with $\gamma=1/2$ implies that the hypothesis of Theorem \ref{thm: regression} holds with probability at least $1-\delta$ under the stated lower bound on $N$. Applying Theorem \ref{thm: regression} again gives the desired probabilistic estimate.
\end{proof}

Using the above theorems, we have a result for the least-squares polynomial approximation as follows:
\begin{theorem}\label{thm: convergence of least-squares polynomial}
    Let $\mu$ be a Borel probability measure on $K_0 \cap \mathbb{R}^d$ absolutely continuous with respect to the Lebesgue measure on $K_0 \cap \mathbb{R}^d$.
    Let $g \in \mathcal{O}_E(K_0)$.
    Let $\{x^i\}_{i=1}^\infty \subset K_0 \cap \mathbb{R}^d$.
    Let $P_{n,N}(x)=\sum_{|\alpha|\le n}c_\alpha x^\alpha$ be the least-squares polynomial of degree $n$ for the data $(x^1,g(x^1)),\dots,(x^N,g(x^N))$ whose coefficient vector is chosen by the Moore--Penrose pseudo-inverse, i.e. the minimum-norm least-squares solution of
    \begin{align*}
        \min \left\{ \sum_{i=1}^N |P(x^i) - g(x^i)|^2 
        : P\text{: polynomial},~{\rm deg}(P) \le n \right\}.
    \end{align*}
    For $m \le n$, we define the truncated polynomial $\widehat{P}_{m,n,N}(x) := \sum_{|\alpha| \le m} c_{\alpha} x^\alpha$ of degree $m$.
    Assume that  $N^{-1}\sum_{i=1}^N \delta_{x^i}$ weakly converges to $\mu$ as $N \to \infty$.
    Let $K_1 \Subset K_0$ be a compact subset.
    Then, for any $m, n \in \mathbb{Z}_{> 0}$ with $m \le n$, we have
    \begin{align}
        \limsup_{N \to \infty}\sup_{z \in K_1} \big|\widehat{P}_{m,n,N}(z) - g(z) \big|
        \le 
        C' \left(
        \sqrt{m!}
        \cdot \frac{R_\mu^n}{\Lambda_n(\mu)^{1/2}}
        + R_{K_1}^m
        \right),
        \label{main thm 2 intro}
    \end{align}
    where $R_\mu := \sup_{x \in {\rm supp}(\mu)}\NormCpt{x}{K_0}$, $R_{K_1} := \sup_{z \in K_1}\NormCpt{z}{K_0}$,  and $C'$ is a constant only depending on $g$, $K_0$, $K_1$, and $\mu$.
    Furthermore, let $L_\mu :=  \max\big(1, \sup_{z \in {\rm supp}(\mu)} \max_{i=1,\dots,d}|z_i|\big)$.
    Assume that $x^1, \dots, x^N$ are i.i.d. random variables with the probability measure $\mu$.
    Then, for $\delta \in (0,1)$ and $m,n \in \mathbb{Z}_{> 0}$ and $N \in \mathbb{Z}_{>0}$ with $m \le n$ such that  
    \begin{align*}
        N &\ge \frac{L_\mu^{4n} \cdot (r_n^E)^2 }{\Lambda_n(\mu)^2}\cdot 4\log(2/\delta), 
    \end{align*}
    we have
    \begin{align}
    {\rm Prob}\left[\sup_{z \in K_1} \big|\widehat{P}_{m,n,N}(z) - g(z) \big|
    \le C' \left(
        \sqrt{2m!}
        \cdot \frac{R_\mu^n}{\Lambda_n(\mu)^{1/2}}
        + R_{K_1}^m
        \right) \right] \ge 1-\delta.
    \label{reconstruction error, polynomial regression}
    \end{align}
\end{theorem}
\begin{proof}
    Let $P_n(x) := \sum_{|\alpha| \le n} c_{\alpha} x^\alpha$. 
    Let $E := \mathbb{C}^d$ and $F := \mathbb{C}$.
    We define the analytic map $f: K_0 \to F$ by $f(z) := g(z) - g(0)$.
    Let $X := (x^1,\dots, x^N)$ and $Y := (f(x^1),\dots, f(x^N))$.

    Since $\mu$ is absolutely continuous with respect to the Lebesgue measure and is a probability measure, no non-zero polynomial can vanish $\mu$-almost everywhere. 
    Hence $\mathbf{D}_n(\mu)$ is positive definite.
    Moreover, by the weak convergence of $N^{-1}\sum_{i=1}^N\delta_{x^i}$ to $\mu$, we have $\mathbf{D}_n(\widehat{\mu}_N)\to \mathbf{D}_n(\mu)$.
    Thus, $\mathbf{U}_{0,n,X}$ is full column rank for all sufficiently large $N$.
    Therefore, in the following limsup argument, we may assume that $\mathbf{U}_{0,n,X}$ is full column rank.

    Let $\mathbf{p}_n(x) := (1,\cdots, x^{\alpha^{(i)}}, \cdots, x^{\alpha^{(r_n^{E})}})$ be a horizontal vector and let $\mathbf{F}_n = {\rm diag}\left( (\sqrt{\alpha^{(i)}!})_{i=1}^{r_n^E}\right)$.
    Since $\mathbf{U}_{0,n,X}$ is full column rank, by the identity $\mathbf{u}_{0,n}(x) = \mathbf{p}_n(x) \mathbf{F}_n^{-1}$,  we see that
    \[
    \begin{pmatrix}
                \mathbf{p}_n(x^1)\\
                \vdots \\
                \mathbf{p}_n(x^N)
        \end{pmatrix}^{\dagger}
        = \mathbf{F}_n^{-1} \mathbf{U}_{0,n, X}^\dagger.
    \]
    Thus, we have
    \begin{align*}
    \begin{pmatrix}
            \mathbf{p}_n(x^1)\\
            \vdots \\
            \mathbf{p}_n(x^N)
    \end{pmatrix}^{\dagger}
    \begin{pmatrix}
            g(0)\\
            \vdots \\
            g(0)
    \end{pmatrix}
    = \begin{pmatrix}
            g(0)\\
            0\\
            \vdots \\
            0
        \end{pmatrix},~~~
    \end{align*}
    and
    \begin{align*}
        P_n(x) &=  \mathbf{p}_n(x)
        \begin{pmatrix}
                \mathbf{p}_n(x^1)\\
                \vdots \\
                \mathbf{p}_n(x^N)
        \end{pmatrix}^{\dagger}
        \begin{pmatrix}
                f(x^1)\\
                \vdots \\
                f(x^N)
        \end{pmatrix}
     + g(0)
     \\
        &= \mathbf{u}_{0,n}(x)
        (\mathbf{U}_{0,n,X})^\dagger  \mathbf{V}_{0,n,Y} 
        (\partial_{w_1}\mathbf{v}_{0,n})(0)^* + g(0).
    \end{align*}
    Thus, the truncated polynomial $\widehat{P}_{m,n,N}$ coincides with $\widehat{f}_{m,n,X} + g(0)$, where $\widehat{f}_{m,n,X}: E \to F$ is the map defined in Theorem \ref{thm: regression}.
    Then, the inequalities \eqref{main thm 2 intro} and \eqref{reconstruction error, polynomial regression} follow from Theorem \ref{thm: regression} and Lemma \ref{lem: empirical error of G inverse} with $\gamma = 1/2$.
\end{proof}
Theorem \ref{thm: convergence of least-squares polynomial} implies that if we have a sufficiently large number $N$ of data points, the truncated polynomial $\widehat{P}_{m,n,N}$ constructed from the least-squares polynomial can uniformly approximate $g$ on any compact subset of $K_0 \cap \mathbb{R}^d$ for suitable $m$ and $n$ with $m \le n$, if we assume that $K_0$ is sufficiently large compared to ${\rm supp}(\mu)$.

We end this section with several lemmas that we use in the proofs of the theorems.
\begin{lemma} \label{lem: perturbation of matrix inverse}
Let $0 < \gamma < 1$.
Let $\mathbf{A}$ and $\mathbf{B}$ be positive definite matrices such that $\|\mathbf{A}^{-1/2}(\mathbf{A} - \mathbf{B})\mathbf{A}^{-1/2}\|_{\rm op} \le 1-\gamma$. 
Then, we have
 \begin{align}
     \left\|\mathbf{I} - \mathbf{A}^{1/2}\mathbf{B}^{-1}\mathbf{A}^{1/2} \right\|_{\rm op} 
     \le \gamma^{-1} - 1.
 \end{align}
\end{lemma}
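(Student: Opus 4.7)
The plan is to diagonalize. Set $\mathbf{X} := \mathbf{A}^{-1/2}\mathbf{B}\mathbf{A}^{-1/2}$, which is Hermitian and positive definite because $\mathbf{A}$ and $\mathbf{B}$ are. Observe that $\mathbf{X}^{-1} = \mathbf{A}^{1/2}\mathbf{B}^{-1}\mathbf{A}^{1/2}$, so the quantity to bound is simply $\|\mathbf{I} - \mathbf{X}^{-1}\|_{\rm op}$, while the hypothesis becomes $\|\mathbf{I} - \mathbf{X}\|_{\rm op} < 1-\varepsilon$.

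Next, I would pass to eigenvalues via the spectral theorem for Hermitian matrices. Writing the eigenvalues of $\mathbf{X}$ as $\lambda_1,\dots,\lambda_N > 0$, the operator norm equals the largest $|\lambda_i - 1|$, so the hypothesis yields $|\lambda_i - 1| < 1-\varepsilon$ for every $i$, hence $\lambda_i > \varepsilon$. The eigenvalues of $\mathbf{I} - \mathbf{X}^{-1}$ are $1 - \lambda_i^{-1} = (\lambda_i - 1)/\lambda_i$, and I would estimate
\begin{align*}
    \left|1 - \lambda_i^{-1}\right| = \frac{|\lambda_i - 1|}{\lambda_i} \le \frac{1-\varepsilon}{\varepsilon} = \varepsilon^{-1} - 1.
\end{align*}
Taking the maximum over $i$ gives the desired bound on $\|\mathbf{I} - \mathbf{X}^{-1}\|_{\rm op}$.

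There is no substantive obstacle here; the only thing to be careful about is confirming the algebraic identity $\mathbf{A}^{1/2}\mathbf{B}^{-1}\mathbf{A}^{1/2} = (\mathbf{A}^{-1/2}\mathbf{B}\mathbf{A}^{-1/2})^{-1}$ (which follows directly from $\mathbf{A}^{1/2}\mathbf{A}^{-1/2} = \mathbf{I}$) and noting that positivity of $\mathbf{X}$ ensures $\lambda_i > 0$ so that the hypothesis $\lambda_i > \varepsilon$ actually improves upon mere invertibility. The argument is purely spectral and requires no inequality tighter than the triangle inequality applied eigenvalue-by-eigenvalue.
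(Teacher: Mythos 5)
Your proof is correct, but it takes a genuinely different route from the paper. Both proofs start from the same algebraic reduction, namely $\mathbf{A}^{1/2}\mathbf{B}^{-1}\mathbf{A}^{1/2} = \mathbf{X}^{-1}$ with $\mathbf{X} = \mathbf{A}^{-1/2}\mathbf{B}\mathbf{A}^{-1/2}$, but they diverge in how they bound $\|\mathbf{I}-\mathbf{X}^{-1}\|_{\rm op}$. The paper sets $\mathbf{E} := \mathbf{A}^{-1/2}(\mathbf{A}-\mathbf{B})\mathbf{A}^{-1/2}$, writes $\mathbf{X}^{-1}-\mathbf{I} = (\mathbf{I}-\mathbf{E})^{-1}-\mathbf{I} = \sum_{i\ge 1}\mathbf{E}^i$, and sums the geometric series $\sum_{i\ge1}(1-\varepsilon)^i = \varepsilon^{-1}-1$; this Neumann-series argument never uses that $\mathbf{E}$ is Hermitian, only that $\|\mathbf{E}\|_{\rm op}<1$, so it would generalize to arbitrary bounded perturbations. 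Your argument instead exploits the Hermitian positive-definiteness of $\mathbf{X}$ to diagonalize and reason eigenvalue by eigenvalue: $|\lambda_i-1|<1-\varepsilon$ forces $\lambda_i>\varepsilon$, and then $|1-\lambda_i^{-1}|=|\lambda_i-1|/\lambda_i < (1-\varepsilon)/\varepsilon$. This is equally rigorous and in fact yields a strict inequality, but it relies more heavily on the spectral theorem and hence on the Hermitian structure. Either proof would serve; the paper's is marginally more robust, yours is marginally sharper in this setting.
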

\begin{proof}
Let $\mathbf{E} := \mathbf{A}^{-1/2}(\mathbf{A} - \mathbf{B})\mathbf{A}^{-1/2}$.
Since $\mathbf{A}^{1/2}\mathbf{B}^{-1}\mathbf{A}^{1/2} - \mathbf{I} = (\mathbf{I} - \mathbf{E})^{-1} - \mathbf{I}$, it follows from the following estimate:
\begin{align*}
    \|\mathbf{A}^{1/2}\mathbf{B}^{-1}\mathbf{A}^{1/2} - \mathbf{I} \|_{\rm op}
    \le \sum_{i=1}^\infty \|\mathbf{E}\|_{\rm op}^i \le \frac{1-\gamma}{1 - (1-\gamma)} = \gamma^{-1} - 1.
\end{align*}
\end{proof}

\section{Approximation of analytic vector fields}\label{sec: reconstruction of vector fields}

We use the same notation as in Section \ref{sec: approximations of push-forwards}.
Let $K \subset E$ be a compact convex set.
Let $V: K \to E$ be an analytic map.
We consider the ordinary differential equation \eqref{ODE}.
Suppose that there exists $T>0$ such that for any $z^0 \in K$, the ordinary differential equation \eqref{ODE} has a solution on $[0,T]$.
For $t \in [0,T]$, we define the flow map $\phi^t: K \to E$ by $\phi^t(w) := z(t)$ where $z(t)$ is the solution of \eqref{ODE} with initial point $z^0 = w$.
We define a continuous linear map $\nabla_V: \mathcal{O}_E(K) \to \mathcal{O}_E(K)$ by
\begin{align}
    (\nabla_Vh)(z) := (\nabla_{V(z)} h)(z).
\end{align}
We remark that $\nabla_Vh = \lim_{t \to 0} t^{-1}((\phi^t)^*h - h)$.
Then, we have the following proposition:
\begin{proposition}
    Let $p \in K$.
    Then, for each $n \ge 0$, we have 
    \begin{align}
    \left(\nabla_V' - \sum_{i=1}^d V_i(p) \partial_{z_i}'\right)(\mathfrak{D}_{p,n}) \subset \mathfrak{D}_{p,n}
    \end{align}
    In particular, we have $\nabla_V'(\mathfrak{D}_{p,n}) \subset \mathfrak{D}_{p,n}$ if $V(p)=0_E$.
\end{proposition}
\begin{proof}
    It suffices to show that
    \begin{align}
        \left(\nabla_V' - \sum_{i=1}^d V_i(p) \partial_{z_i}'\right) \delta_p^{(\alpha)}\in \mathfrak{D}_{p,n}
        \label{eq:invariance_of_Dpn}
    \end{align}
    for any $\alpha$ with $|\alpha| \le n$.
    By definition $\nabla_V' \delta_p^{(\alpha)} (h) = \delta_p^{(\alpha)} \left(\sum_{i=1}^d V_i\partial_{z_i}h\right)$ holds.
    Thus, we have 
    \[\nabla_V' \delta_p^{(\alpha)} (h) = \sum_{i=1}^d V_i(p) \partial_{z_i}'(\delta_p^{(\alpha)})(h) + \mu(h)\]
     for some $\mu \in \mathfrak{D}_{p,n}$, namely, \eqref{eq:invariance_of_Dpn} holds.
\end{proof}

For $t \in [0,T]$, let $\mathbf{C}^{(t)}_n \in \mathbb{C}^{r^E_n \times r_n^E}$ be the representation matrix of $\widetilde{\phi_*^t}|_{\widetilde{\mathfrak{D}}_{p,n}}: \widetilde{\mathfrak{D}}_{p,n} \to \widetilde{\mathfrak{D}}_{\phi^t(p),n}$ with respect to the basis $\{ u_{p,i} \}_{i=1}^{r_n^E}$ and $\{ u_{\phi^t(p), i} \}_{i=1}^{r_n^E}$.
Then, we have the following proposition:
\begin{proposition}
    Assume $V(p) = 0_E$.
    Let $\mathbf{A}_n$ be the representation matrix of $\widetilde{\nabla_V'}|_{\widetilde{\mathfrak{D}}_{p,n}}: \widetilde{\mathfrak{D}}_{p,n} \to \widetilde{\mathfrak{D}}_{p,n}$ with respect to the basis $\{ u_{p,i} \}_{i=1}^{r_n^E}$.
    Then, for $t \in [0, T]$, we have
    \begin{align}
        e^{t\mathbf{A}_n} = \mathbf{C}^{(t)}_n. \label{expA = C}
    \end{align}
\end{proposition}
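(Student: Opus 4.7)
The plan is to show that $t \mapsto \mathbf{C}^{(t)}_n$ is a differentiable one-parameter semi-group of matrices with generator $\mathbf{A}_n$, so that \eqref{expA = C} reduces to the uniqueness theorem for linear matrix ODEs. Since $V(p) = 0_E$ gives $\phi^t(p) = p$ for all $t \in [0,T]$, Proposition \ref{prop: basic properties of f_*} ensures $\widehat{(\phi^t)_*}(\widehat{\mathfrak{D}}_{p,n}) \subset \widehat{\mathfrak{D}}_{p,n}$, so $\mathbf{C}^{(t)}_n$ is a well-defined endomorphism. Dualizing the identity $\phi^{t+s} = \phi^t \circ \phi^s$ (whence $(\phi^{t+s})^* = (\phi^s)^* \circ (\phi^t)^*$) yields $(\phi^{t+s})_* = (\phi^t)_* \circ (\phi^s)_*$, which at the matrix level is the semi-group relation
$$\mathbf{C}^{(t+s)}_n = \mathbf{C}^{(t)}_n \mathbf{C}^{(s)}_n, \qquad \mathbf{C}^{(0)}_n = \mathbf{I}.$$

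Next I would verify $\mathbf{A}_n = \lim_{t \to 0} t^{-1}(\mathbf{C}^{(t)}_n - \mathbf{I})$. Fix a basis element $\mu \in \mathfrak{D}_{p,n}$ and any $h \in \mathcal{O}_E(K)$. Because $V$ is analytic, $\phi^t$ depends jointly analytically on $(t, w)$ in a neighborhood of $\{0\} \times K$, so $t \mapsto (\phi^t)^* h$ is a differentiable curve in $\mathcal{O}_E(K)$ with derivative $\nabla_V h$ at $t = 0$. Continuity of $\mu \in \mathcal{O}_E(K)'$ lets us interchange limit and pairing, yielding
$$\lim_{t\to 0} \frac{\langle (\phi^t)_* \mu - \mu \mid h\rangle}{t} = \left\langle \mu \;\middle|\; \nabla_V h \right\rangle = \langle \nabla_V' \mu \mid h\rangle.$$
Since $(\phi^t)_*\mu$ and $\nabla_V'\mu$ both lie in the finite-dimensional space $\mathfrak{D}_{p,n}$ (again because $\phi^t$ fixes $p$), this pointwise weak convergence is equivalent to convergence in any norm on $\mathfrak{D}_{p,n}$; reading the result in the basis $\{u_{p,i}\}$ (via $\overline{\mathbf{FB}}_K$) gives $t^{-1}(\mathbf{C}^{(t)}_n - \mathbf{I}) \to \mathbf{A}_n$.

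Combining the two ingredients, for each $t \in [0,T)$,
$$\frac{d}{dt}\mathbf{C}^{(t)}_n = \lim_{s\to 0}\frac{\mathbf{C}^{(t+s)}_n - \mathbf{C}^{(t)}_n}{s} = \mathbf{C}^{(t)}_n \mathbf{A}_n = \mathbf{A}_n \mathbf{C}^{(t)}_n,$$
so $\mathbf{C}^{(t)}_n$ solves the linear matrix ODE $\dot{\mathbf{X}} = \mathbf{A}_n \mathbf{X}$ with $\mathbf{X}(0) = \mathbf{I}$; by uniqueness, $\mathbf{C}^{(t)}_n = e^{t\mathbf{A}_n}$ on $[0,T)$, and continuity extends the identity to $t = T$. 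The main technical point I expect to handle with care is the differentiability of $t \mapsto (\phi^t)^* h$ in the inductive-limit topology on $\mathcal{O}_E(K)$, i.e., uniform control of the Taylor remainder $t^{-1}((\phi^t)^* h - h) - \nabla_V h$ on a shrinking open neighborhood of $K$; this is supplied by the joint analyticity of the flow in $(t,w)$, but should be spelled out explicitly to legitimize the interchange of limit and pairing with $\mu$.
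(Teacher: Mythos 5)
Your proof is correct and follows the same route as the paper: fixed point $\phi^t(p)=p$, the semigroup identity $\mathbf{C}^{(t+s)}_n = \mathbf{C}^{(t)}_n\mathbf{C}^{(s)}_n$, the derivative at $t=0$ producing $\mathbf{A}_n$ via $\nabla_V h = \lim_{t\to 0}t^{-1}((\phi^t)^*h - h)$, and uniqueness for the linear matrix ODE. You actually supply more detail than the paper, which silently passes over the interchange of limit and pairing that you correctly flag as the only technical point.
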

\begin{proof}
    Since $V(p) = 0_E$, we have $\phi^t(p) = p$ for any $t \in [0, T]$.
    For $s, t \in [0,T]$ with $s+t \in [0,T]$, we have $\mathbf{C}^{(t+s)}_n = \mathbf{C}^{(t)}_n \mathbf{C}^{(s)}_n$.
    Since $\nabla_Vh = \lim_{t \to 0} t^{-1}((\phi^t)^*h - h)$, we have
    \begin{align*}
        \frac{{\rm d}}{{\rm d}t}\mathbf{C}^{(t)}_n = \mathbf{C}^{(t)}_n \mathbf{A}_n.
    \end{align*}
    Thus,  we have \eqref{expA = C}.
\end{proof}

Assume that $V(p) = 0_E$.
Let $f= \phi^T$ and let $\widehat{\mathbf{C}}_{m,n,Z} \in \mathbb{C}^{r_m^E \times r_m^E}$ be the matrix defined in \eqref{C_mnN} with respect to the point $p\in E$, $Z = (z^1,\dots, z^N) \in K^N$, and $f = \phi^T$.
When $\widehat{\mathbf{C}}_{m,n,Z}$ has no non-positive real eigenvalues, for $m, n \in \mathbb{Z}_{\ge 0}$ with $m \le n$, we define 
\begin{align}
    \widehat{\mathbf{A}}_{m,n,Z} := \frac{1}{T}\log \widehat{\mathbf{C}}_{m,n,Z},
\end{align}
where $\log$ is the matrix logarithm, which is defined by 
\begin{align}
    \log(\mathbf{A}) := (\mathbf{A} - \mathbf{I})\int_0^1 (\mathbf{I} + t(\mathbf{A} -\mathbf{I}))^{-1} {\rm d}t
    \label{matrix log}
\end{align}
for a square matrix $\mathbf{A}$ with no non-positive real eigenvalues \cite{highamFunctionsMatrices2008}.
For each $z \in E$ and $m, n \in \mathbb{Z}_{\ge 0}$ with $m \le n$, we define $\widehat{V}_{m,n,Z}(z) \in E$ by
\begin{align}
    \widehat{V}_{m,n,Z,i}(z) &:= \mathbf{u}_{p,m}(z) \widehat{\mathbf{A}}_{m,n,Z}^*(\partial_{z_i}\mathbf{u}_{p,m}(0))^*
\end{align}
for $i=1,\dots,d$.
For each $n \ge 0$, we define
\begin{align}
    B_n := \sup_{t \in [0,1]} \left\| \left(\mathbf{I}_{r_n^E} + t\big(\mathbf{C}_n^{(T)} - \mathbf{I}_{r_n^E}\big)\right)^{-1}\right\|_{\rm op}.
\end{align}
Then, we prove the following theorem:
\begin{theorem}\label{thm: reconstruction of vector field}
    Let $p \in E \cap \mathbb{R}^d$ and let $K_0 \subset E$ be a compact absolutely convex neighborhood of $0$.
    Let $K := p + K_0$.
    Let $V: K \to E$ be an analytic map such that $V(p) = 0_E$.
    Let $m, n \in \mathbb{Z}_{\ge 0}$ with $m \le n$, and $N \in \mathbb{Z}_{>0}$.
    Let $\mu$ be a Borel probability measure on $K_0 \cap \mathbb{R}^d$ such that ${\rm supp}(\mu) \subset K_0 \cap \mathbb{R}^d$.
    Let $z_0^1,\dots, z_0^N \in {\rm supp}(\mu)$ and define $\widehat{\mu}_N := N^{-1} \sum_{i=1}^N\delta_{z_0^i}$.
    Let $Z_N := (p+z_0^1,\dots,p+z_0^N) \in K^N$.
    Assume that
\begin{enumerate}[label = (\roman*)]
    \item the eigenvalues of the Jacobian matrix ${\rm d}V_p$ are real,
    \item $\mathbf{D}_n(\mu)$ is invertible and there exists $\gamma_1 \in (0,1)$ such that  
    \begin{align*}
        \left\|\mathbf{I} - \mathbf{D}_n(\mu)^{-1/2}\mathbf{D}_n(\widehat{\mu}_N)\mathbf{D}_n(\mu)^{-1/2}\right\|_{\rm op} \le 1-\gamma_1
    \end{align*}
    \item there exists $\gamma_2 \in (0,1)$ such that $\left\| \mathbf{C}^{(T)}_m - \widehat{\mathbf{C}}_{m,n,Z_N} \right\|_{\rm op} < (1 - \gamma_2) B_m^{-1}$. \label{CT - CmnZ}
\end{enumerate}
Then, for any compact subset $K_1 \Subset K_0$, we have
\begin{align}
    \sup_{z \in p + K_1} \big\|\widehat{V}_{m,n,Z_N}(z) - V(z) \big\|_E \le C'\left(
    \frac{B_m^2}{\gamma_2}\sqrt{\frac{m!}{\gamma_1}}
    \frac{R_\mu^n}{\Lambda_n(\mu)^{1/2}}
    + R_{K_1}^m
    \right)
    \label{reconstruction error, vector field}
\end{align}
where $R_\mu := \sup_{x \in {\rm supp}(\mu)}\NormCpt{x}{K_0}$, $R_{K_1} := \sup_{z \in K_1}\NormCpt{z}{K_0}$, and $C'$ is a constant only depending on $V$, $p$, $K_0$, $K_1$, and $\mu$.
\end{theorem}
\begin{proof}
    Let $\mathbf{C}_m := \mathbf{C}_m^{(T)}$.
    Since the eigenvalues of $\mathbf{A}_m$ are real, we have $\log \mathbf{C}_m = T\mathbf{A}_m$.
    Let $\mathbf{D}_{m,t} := \mathbf{I} + t(\mathbf{C}_m - \mathbf{I})$.
    Then, for $t \in [0,1]$, we have
    \begin{align}
        \mathbf{I} + t(\widehat{\mathbf{C}}_{m,n,Z_N} - \mathbf{I}) 
        = (\mathbf{I} +  t(\widehat{\mathbf{C}}_{m,n,Z_N} - \mathbf{C}_m )\mathbf{D}_{m,t}^{-1})\mathbf{D}_{m,t}. \label{I - t hatC - I}
    \end{align}
    Since $\|t(\mathbf{C}_m - \widehat{\mathbf{C}}_{m,n,Z_N})\mathbf{D}_{m,t}^{-1}\|_{\rm op} < 1-\gamma_2$ by the condition \ref{CT - CmnZ}, we see that $\widehat{\mathbf{C}}_{m,n,Z_N}$ has no non-positive real eigenvalues and that
    \begin{align}
        \left\|(\mathbf{I} + t(\widehat{\mathbf{C}}_{m,n,Z_N} - \mathbf{I}))^{-1}\right\|_{\rm op} \le \gamma_2^{-1}B_m \label{I + t(C-I)}
    \end{align}
    holds.
    By the same argument as in the proof of Theorem \ref{thm: regression}, there exists a compact absolutely convex neighborhood $K_2 \Supset K_0$ of $0$ such that 
    \begin{align*}
    &\left\|\widehat{V}_{m,n,Z_N}(z) - V(z)\right\|_E \\
    &\lesssim 
    \frac{e^{\sup_{\zeta\in p+K_1}\|\zeta\|_E^2/2}}{T}
    \big\|\log\widehat{\mathbf{C}}_{m,n,Z_N} - \log\mathbf{C}_m \big\|_{\rm op} 
    + \big\|\pi_{p,m}\mathbf{e}_z - \mathbf{e}_z \big\|_{\exp, p+K_2}.
\end{align*}
    Thus, it suffices to estimate $\big\|\log\widehat{\mathbf{C}}_{m,n,Z_N} - \log\mathbf{C}_m \big\|_{\rm op}$.
    By direct calculation, we have
    \begin{align*}
        &\log\widehat{\mathbf{C}}_{m,n,Z_N} - \log\mathbf{C}_m \\
        &= (\widehat{\mathbf{C}}_{m,n,Z_N} - \mathbf{I})\int_0^1 (\mathbf{I} + t(\widehat{\mathbf{C}}_{m,n,Z_N} -\mathbf{I}))^{-1} {\rm d}t 
        - (\mathbf{C}_m - \mathbf{I})\int_0^1 (\mathbf{I}_{r_m^E} + t(\mathbf{C}_m -\mathbf{I}))^{-1} {\rm d}t \\
        & =  \int_0^1(\mathbf{I} + t(\widehat{\mathbf{C}}_{m,n,Z_N} -\mathbf{I}))^{-1} 
         (\widehat{\mathbf{C}}_{m,n,Z_N} - \mathbf{C}_m)  \mathbf{D}_{m,t}^{-1} {\rm d}t\\
         & = 
          \int_0^1 \mathbf{D}_{m,t}^{-1} (\mathbf{I} +  t( \widehat{\mathbf{C}}_{m,n,Z_N} - \mathbf{C}_m)\mathbf{D}_{m,t}^{-1})^{-1} 
         (\widehat{\mathbf{C}}_{m,n,Z_N} - \mathbf{C}_m)  \mathbf{D}_{m,t}^{-1} {\rm d}t.
    \end{align*}
    Therefore, using \eqref{I + t(C-I)} and the Neumann series estimate (see the proof of Lemma \ref{lem: perturbation of matrix inverse}) 
    \[
        \left\|
        \left(\mathbf{I} + t( \widehat{\mathbf{C}}_{m,n,Z_N} - \mathbf{C}_m)\mathbf{D}_{m,t}^{-1}\right)^{-1}
        \right\|_{\rm op}
        \le \gamma_2^{-1},
    \]
    we have
    \begin{align*}
        \big\|\log\widehat{\mathbf{C}}_{m,n,Z_N} - \log\mathbf{C}_m \big\|_{\rm op} 
        \le \frac{B_m^2}{\gamma_2}
        \left\|\widehat{\mathbf{C}}_{m,n,Z_N} - \mathbf{C}_m\right\|_{\rm Fr}.
    \end{align*}
    By the same argument as in the proof of Theorem \ref{thm: regression}, we have \eqref{reconstruction error, vector field}.
\end{proof}
\begin{proof}[Proof of Theorem \ref{thm: main thm 3, intro}]
    Fix $m$ and a compact set $K_1 \Subset K_0$. By the assumption on $\rho$, after replacing the assumption by the equivalent estimate
    \[
        \frac{R_\mu^n}{\Lambda_n(\mu)^{1/2}} \lesssim \rho^n
    \]
    for all sufficiently large $n$, Theorem \ref{thm: main thm} gives
    \[
        \limsup_{N\to\infty}
        \|\mathbf{C}_m^{(T)}-\widehat{\mathbf{C}}_{m,n,X_N}\|_{\rm op}
        \le
        \limsup_{N\to\infty}
        \|\mathbf{C}_m^{(T)}-\widehat{\mathbf{C}}_{m,n,X_N}\|_{\rm Fr}
        \lesssim
        \sqrt{m!}\,\rho^n .
    \]
    Hence, for sufficiently large $n$ and then sufficiently large $N$, condition \ref{CT - CmnZ} in Theorem \ref{thm: reconstruction of vector field} is satisfied with some $\gamma_2\in(0,1)$. Applying Theorem \ref{thm: reconstruction of vector field} and taking $N\to\infty$ yields the desired estimate, after absorbing the constants depending on $m$ into $C_m$.
\end{proof}
\begin{remark}
The assumption that the Jacobian matrix has real eigenvalues is essential for ensuring convergence for all $m$, due to the arbitrariness in choosing branches for the matrix logarithm.
If we take suitably small $m$ and $T$, we can prove a similar convergence result.
\end{remark}

\bibliography{reference}
\bibliographystyle{hplain} 

\appendix

\section{Asymptotics of the lowest eigenvalues of Hankel matrices}
\label{appendix: Asymptotics of the lowest eigenvalues of Hankel matrices}
Here, we review the results of \cite[Section 3.3]{wilfFiniteSectionsClassical1970}.
Let $a \in \mathbb{R}$ and $r>0$.
We fix a measurable map $w: [a-r, a+r] \to \mathbb{R}_{\ge 0}$ such that
\begin{align}
    \int_{-1}^1 \frac{\log(w(a+rt))}{\sqrt{1 - t^2}} {\rm d}t > -\infty.
\end{align}
We define the matrix of size $n+1$ by
\begin{align}
    \mathbf{C}_n(a,r) := \left( \int_{a-r}^{a+r} t^{i+j} w(t) {\rm d}t \right)_{i,j=0,\dots, n}.
\end{align}
We define
\begin{align*}
    \sigma_{a,r} = \sigma := 
    \begin{cases}
        \displaystyle \frac{|a|+1}{r} +  \left( \left( \frac{|a|+1}{r} \right)^2 - 1 \right)^{1/2} & \text{ if } |a| + a^2 -r^2 \ge 0, \\[15pt]
        \displaystyle \left(\frac{1}{r^2 - a^2} + 1  \right)^{1/2} + \left(\frac{1}{r^2 - a^2}\right)^{1/2} &\text{ if } |a| + a^2 -r^2 \le 0.
    \end{cases}
\end{align*}
We also define $A(z)$ by 
\begin{align*}
    \log|A(\rho e^{i \phi})| 
    =
    \frac{-1}{4\pi} \int_{-\pi}^\pi \log \big(w(r \cos t + a)| \sin t| \big) \frac{\rho^2 - 1}{1 - 2 \rho \cos(\phi - t) + \rho^2} {\rm d}t
\end{align*}
Let 
\begin{align*}
    \zeta(z) &:= \frac{z-a}{r} + \left[\left(\frac{z - a}{r} \right)^2 - 1\right]^{1/2} \\
    g(\theta) &:= |\zeta(e^{i\theta})|
\end{align*}
\begin{theorem}[{\cite[Theorem 3.2]{wilfFiniteSectionsClassical1970}}]
    \label{thm: asymptotic formula of smallest eigenvalues}
    Let $\lambda_n(a,r)$ be the smallest eigenvalue of $\mathbf{C}_n(a,r)$.
    Then, we have 
    \begin{align*}
        \lambda_n(a,r) \sim &\sigma^{-2n-2} \\
        &\times 
        \begin{cases}
            \displaystyle\frac{\sqrt{2n}(\sigma^{2} - 1)}{\gamma}  & \text{ if } |a| + a^2 -r^2 > 0, \\[15pt]
            \displaystyle\frac{(2n)^{1/4}(\sigma^{2} - 1)}{\gamma}  & \text{ if } |a| + a^2 -r^2 = 0, \\[15pt]
            \displaystyle\frac{2\sqrt{2n}}{\gamma}\left(\frac{1}{\sigma^2 - 1} + \left( \frac{1}{\sigma^4 - 2\sigma^2 \cos 2 \phi_0 - 1} \right)^{1/2}\right)^{-1}  & \text{ if } |a| + a^2 -r^2 < 0, \\ 
        \end{cases}
    \end{align*}
    $\phi_0$ satisfies $e^{i\phi_0} = \zeta(e^{i \theta_0})/|\zeta(e^{i \theta_0})|$ with $\cos \theta_0 = |a|/(a^2 - r^2)$, and 
    \begin{align}
        \gamma &:=
        \begin{cases}
            \displaystyle \frac{|A(\zeta(-1))|^2 \sigma^{1/2}}{2^{3/2}\pi^{5/2}r \sqrt{|g''(\pi)|}} & \text{ if }|a| + a^2 -r^2 > 0, \\[15pt]
            \displaystyle \frac{3^{1/4} \Gamma(1/4) |A(\zeta(-1))|^2 \sigma^{1/4}}{2^{17/4}\pi^{2}r |g^{(4)}(\pi)|^{1/4}} & \text{ if }|a| + a^2 -r^2 = 0, \\[15pt]
            \displaystyle \frac{|A(\zeta(e^{i\theta_0}))|^2 (2 \sigma)^{1/2}}{2\pi^{3/2}r \sqrt{|g''(\theta_0)|}} & \text{ if }|a| + a^2 -r^2 < 0,
        \end{cases}
    \end{align}
\end{theorem}
\begin{remark}
    In \cite{wilfFiniteSectionsClassical1970}, they use the notation $g^{(iv)}$ for the fourth derivative of $g$ (compare \cite[Section 2.4]{wilfFiniteSectionsClassical1970} and \cite[Theorem 3.1]{widomEigenvaluesCertainHermitian1958a}).
\end{remark}
In the rest of this appendix, unless otherwise stated, $\lambda_n(a,r)$ denotes the smallest eigenvalue of $\mathbf{C}_n(a,r)$ in the special case $w\equiv 1$.
\begin{lemma} \label{lem: lower bound}
    Let $p= (p_i)_{i=1}^d \in \mathbb{R}^d$ and let $\mathbf{r} = (r_i)_{i=1}^d \in \mathbb{R}_{>0}^d$.
    Let $\Delta(p, \mathbf{r}) := \prod_{i=1}^d [p_i - r_i, p_i + r_i]$ be a rectangle.
    Let $\mu_{\Delta(p, \mathbf{r})}$ be the Lebesgue measure on $\Delta(p, \mathbf{r})$.
    Then, 
    \begin{align}
        \Lambda_n(\mu_{\Delta(p, \mathbf{r})}) \ge \prod_{i=1}^d \lambda_{n}(p_i,r_i).
        \label{ineq for prod of eigenvalues}
    \end{align}
\end{lemma}
\begin{proof}
    We regard $\mathbb{R}^d$ as a Hilbert space with the usual inner product.
    Let ${\rm e}_i \in \mathbb{R}^{n+1}$ be the $i$-th elementary vector whose elements are zero except for the $i$-th component, which is $1$.
    For $\alpha = (\alpha_1,\dots,\alpha_d) \in \mathbb{Z}_{\ge 0}^d$, we define ${\rm e}_\alpha ={\rm e}_{\alpha_1} \otimes \dots \otimes{\rm e}_{\alpha_d} \in (\mathbb{R}^{n+1})^{\otimes d}$.
    Let $W \subset (\mathbb{R}^{n+1})^{\otimes d}$ be the subspace generated by $\{{\rm e}_\alpha\}_{|\alpha| \le n}$ and let $P: (\mathbb{R}^{n+1})^{\otimes d} \to W$ be the orthogonal projection.
    Let $C: (\mathbb{R}^{n+1})^{\otimes d} \to (\mathbb{R}^{n+1})^{\otimes d}$ be the linear map $\bigotimes_{i=1}^d \mathbf{C}_n(p_i, r_i)$ and let $D := PC|_W: W \to W$.
    Then, the inequality \eqref{ineq for prod of eigenvalues} follows from the fact that $\mathbf{D}_n(\mu_{\Delta(p, \mathbf{r})})$ coincides with the representation matrix of $D$ with respect to the basis $( {\rm e}_\alpha)_{|\alpha| \le n}$.
\end{proof}

As a result, we have the following theorem.
\begin{theorem}\label{thm: asymptotic formula for the minimal eigenvalues for general mu}
    We use the same notation as in Lemma \ref{lem: lower bound}.
    Let $\mu$ be a finite positive Borel measure absolutely continuous with respect to the Lebesgue measure on $\mathbb{R}^d$.
    Let $\rho$ be the Radon--Nikodym derivative of $\mu$.
    Assume that $\rho$ satisfies $\mathop{\rm ess.inf}_{\Delta(p, \mathbf{r})}(\rho) > 0$.
    Let 
    \begin{align*}
        d_{p,\mathbf{r}} := \# \left\{ i \in \{1,2, \dots, d\}  : |p_i| + p_i^2 -r_i^2  = 0 \right\}.
    \end{align*}
    Then, we have
    \begin{align}
        \frac{1}{\Lambda_n(\mu)}
        \lesssim 
        \frac{1}{\Lambda_n(\mu_{\Delta(p, \mathbf{r})})}
        \lesssim  n^{d/2 - d_{p,\mathbf{r}}/4}\prod_{i=1}^d \sigma_{p_i, r_i}^{2n}.
    \end{align}
\end{theorem}
\begin{proof}
    The first inequality follows from $\int h \, {\rm d}\mu_{\Delta(p, \mathbf{r})} \lesssim \int h \,{\rm d}\mu$ for any non-negative measurable function $h$ on $\mathbb{R}^d$.
    The second inequality follows from Lemma \ref{lem: lower bound} with Theorem \ref{thm: asymptotic formula of smallest eigenvalues}.
\end{proof}

\section{Notation table} \label{sec: notation table}

\begin{longtable}{p{0.25\textwidth}p{0.7\textwidth}}
\caption{List of notation}\label{tab:notation}\\
\toprule
\textbf{Notation} & \textbf{Meaning} \\
\midrule
\endfirsthead

\multicolumn{2}{c}{{\tablename\ \thetable{} -- continued from previous page}}\\
\toprule
\textbf{Notation} & \textbf{Meaning} \\
\midrule
\endhead

\midrule
\multicolumn{2}{r}{{Continued on next page}}\\
\bottomrule
\endfoot

\bottomrule
\endlastfoot

$\mathrm{i}$ & Imaginary unit of $\mathbb{C}$. \\[0.4em]

$\overline{a}$ & Complex conjugate of $a \in \mathbb{C}$. \\[0.4em]

$|a|$ & Absolute value (modulus) of $a \in \mathbb{C}$. \\[0.4em]

$S_{>0}$ & Set of positive elements of $S \subset \mathbb{R}$. \\[0.4em]

$S_{\ge 0}$ & Set of non-negative elements of $S \subset \mathbb{R}$. \\[0.4em]

${\rm e}_i$ & The $i$-th elementary vector in $E$, i.e.\ the vector whose components are $0$ except for the $i$-th component, which is $1$. \\[0.4em]

$z_i$ & The $i$-th component of $z \in E$, for $i=1,\dots,d$. \\[0.4em]

${\rm span}(S)$ & Linear subspace generated by a subset $S$ of a vector space. \\[0.4em]

$\mathbb{K}^{m \times n}$ & Set of $m \times n$ matrices with coefficients in $\mathbb{K} = \mathbb{R}$ or $\mathbb{C}$. \\[0.4em]

$\mathbf{I}_n$ & Identity matrix in $\mathbb{C}^{n \times n}$ (subscript $n$ is omitted if no confusion arises). \\[0.4em]

$\mathbf{O}_n$ & Zero matrix in $\mathbb{C}^{n \times n}$ (subscript $n$ is omitted if no confusion arises). \\[0.4em]

$\mathbf{A}^*$ & Adjoint matrix of a matrix $\mathbf{A}$. \\[0.4em]

$\mathbf{A}^\top$ & Transpose matrix of a matrix $\mathbf{A}$. \\[0.4em]

$\mathbf{A}^\dagger$ & Moore--Penrose pseudo-inverse of a matrix $\mathbf{A}$. \\[0.4em]

$\|\mathbf{A}\|_{\rm Fr}$ &
Frobenius norm of $\mathbf{A} \in \mathbb{C}^{m \times n}$,
\[
\|\mathbf{A} \|_{\rm Fr} := \bigl({\rm tr}(\mathbf{A}^*\mathbf{A})\bigr)^{1/2}.
\] \\[0.4em]

$\|\mathbf{A}\|_{\rm op}$ &
Operator norm of $\mathbf{A} \in \mathbb{C}^{m \times n}$,
\[
\|\mathbf{A} \|_{\rm op} := \sup_{\substack{\mathbf{v} \in \mathbb{C}^{n\times 1} \\ \|\mathbf{v}\|_{\rm Fr} = 1}} \|\mathbf{A}\mathbf{v}\|_{\rm Fr}.
\] \\[0.4em]

$\alpha = (\alpha_1,\dots,\alpha_d)$ & A multi-index in $\mathbb{Z}_{\ge 0}^d$. \\[0.4em]

$|\alpha|$ & Length of a multi-index $\alpha = (\alpha_1,\dots,\alpha_d) \in \mathbb{Z}_{\ge 0}^d$,
\[
|\alpha| := \sum_{i=1}^d \alpha_i.
\]
(When used, no confusion with the absolute value on $\mathbb{C}$ will occur.) \\[0.4em]

$c^\alpha$ & For $c = (c_1,\dots,c_d) \in \mathbb{C}^d$ and $\alpha \in \mathbb{Z}_{\ge 0}^d$,
\[
c^\alpha := \prod_{i=1}^d c_i^{\alpha_i}.
\] \\[0.4em]

$\alpha!$ & For $\alpha \in \mathbb{Z}_{\ge 0}^d$,
\[
\alpha! := \prod_{i=1}^d \alpha_i!.
\] \\[0.4em]

${\rm int}(K)$ & Interior of a subset $K$ of a topological space. \\[0.4em]

$K_1 \Subset K_2$ & $K_1$ is relatively compact in $K_2$, i.e.\ $K_1 \subset {\rm int}(K_2)$. \\[0.4em]

$\mu|_Y$ & Restriction of a measure $\mu$ on a measurable space $(X,\mu)$ to a measurable subset $Y \subset X$:
\[
\mu|_Y(A) := \mu(Y \cap A).
\] \\[0.4em]

$\operatorname*{ess\,inf}_Y(\rho)$ &
Essential infimum of a measurable function $\rho : X \to \mathbb{R}$ on a measurable set $Y \subset X$:
\[
\operatorname*{ess\,inf}_Y(\rho) := \sup\left\{ a \in \mathbb{R} : \mu\bigl(\rho^{-1}((-\infty, a)) \cap Y\bigr) = 0 \right\}.
\] \\[0.4em]

${\rm Prob}[A]$ & Probability of an event $A$. \\[0.4em]

$a_\lambda \lesssim b_\lambda$ &
For two families $\{a_{\lambda}\}_{\lambda \in \Lambda}$ and
$\{b_{\lambda}\}_{\lambda \in \Lambda}$ of real numbers, this means that
there exists $C>0$ such that for all $\lambda \in \Lambda$,
$a_{\lambda} \le C b_{\lambda}$. \\[0.4em]

$V'$ & Dual space of a topological vector space $V$ over $\mathbb{C}$, i.e.\ the space of all continuous linear maps from $V$ to $\mathbb{C}$, equipped with the strong topology. \\[0.4em]

$L'$ & Dual map of a continuous linear map $L : V_1 \to V_2$, i.e.\ $L' : V_2' \to V_1'$. \\[0.4em]

$\langle \cdot,\cdot \rangle_H$ & Inner product of a Hilbert space $H$. \\[0.4em]

$\|\cdot\|_H$ & Norm of a Hilbert space $H$ induced by $\langle \cdot,\cdot \rangle_H$. \\[0.4em]

$h^*$ &
For $h \in H$, the unique element of $H'$ defined by
\[
h^*(g) = \langle g, h\rangle_{H}, \qquad g \in H.
\] \\[0.4em]

$\mathbf{e}_p$ &
For $p \in E$, the function $\mathbf{e}_p \in \mathcal{O}_E(E)$ defined by
\[
\mathbf{e}_p(z) := e^{p^* z}, \qquad z \in E.
\] \\[0.4em]

$\nabla_w h(z)$ &
Directional derivative of $h$ at $z$ in the direction $w \in E$,
\[
\nabla_w h(z) := \lim_{|a| \to 0} \frac{h(z+aw) - h(z)}{a}.
\] \\[0.4em]

$\partial_{z_i}$ & Partial derivative $\partial_{z_i} := \nabla_{{\rm e}_i}$. \\[0.4em]

$\mu_E$ & 
Scaled Lebesgue measure $\pi^{-d}{\rm d}x{\rm d}y$ via the identification $\mathbb{R}^d \times \mathbb{R}^d \cong \mathbb{C}^d;~(x,y) \mapsto x + \mathrm{i}y$. 
\\[0.4em]

$\mathcal{O}_E(U)$ & Space of holomorphic functions on an open subset $U \subset E$, equipped with the topology of uniform convergence on compact subsets. \\[0.4em]

$\mathcal{O}_E(S)$ &
For a locally closed subset $S \subset E$, the space of analytic functions on $S$, defined as the inductive limit of $\{\mathcal{O}_E(U)\}_U$ with restriction maps over open $U \subset E$ containing $S$ as a closed subset. \\[0.4em]

\end{longtable}

\end{document}